\documentclass[a4paper,mathserif,11pt]{article}

\usepackage{a4wide}
\usepackage[T1]{fontenc}
\usepackage[latin1]{inputenc}
\usepackage[english]{babel}
\usepackage{authblk}
\usepackage{amsmath,amsfonts,amsthm,amssymb}
\usepackage{cite}
\usepackage{stmaryrd}
\usepackage{bbold}
\usepackage{paralist}
\usepackage[normalem]{ulem}
\usepackage[bold]{hhtensor}
\usepackage{mathabx}
\usepackage{nicefrac}
\usepackage{tikz}
\usetikzlibrary{arrows}
\usepackage{pgfplots,pgfplotstable}
\pgfqkeys{/pgfplots}{
  cycle list name = black white
}
\usepackage{hyperref}
% je vire hypersetup
\usepackage{verbatimbox}
\usepackage{setspace}

%\doublespacing

%\usepackage[notcite,notref]{showkeys}

%-------------------------------------------------------------------------------%

%\renewcommand\Affilfont{\footnotesize}

% Environments

\newtheorem{remark}{Remark}
\newtheorem{lemma}[remark]{Lemma}
\newtheorem{proposition}[remark]{Proposition}

% Shortcuts

\newcommand{\email}[1]{\href{mailto:#1}{#1}}

\newcommand{\N}{\mathbb{N}}
\newcommand{\Z}{\mathbb{Z}}
\newcommand{\R}{\mathbb{R}}
\newcommand{\eps}{\varepsilon}
\newcommand{\dis}{\displaystyle}

% Differential operators

\renewcommand{\vec}[1]{\boldsymbol{#1}}
\newcommand{\grad}{\vec{\nabla}}
\renewcommand{\div}{{\rm div}}

% Continuous scalar products and norms

%\newcommand{\scal}[3][]{{(#2,#3)}_{#1}}
\newcommand{\norm}[2][]{{\left\lVert#2\right\rVert}_{#1}}
%\newcommand{\seminorm}[2][]{{|#2|}_{#1}}

% Continuous spaces

\newcommand{\Ltwo}[1][{\cal D}]{{L^2(#1)}}
\newcommand{\Ltwon}[1][{\cal D}]{{L^2_{\rm n}(#1)}}

%\newcommand{\red}[1]{\color{red} #1}

%\newcommand{\fl}[1]{\textbf{#1}}

%------------------------------------------------------------------------------%

\title{On the best constant matrix approximating an oscillatory matrix-valued coefficient in divergence-form operators}
\author[1,3]{Claude Le~Bris\footnote{\email{claude.le-bris@enpc.fr}}}
\affil[1]{\'Ecole des Ponts ParisTech, CERMICS, 6 et 8 avenue Blaise Pascal, 77455 Marne-la-Vall\'ee Cedex 2, France}
\author[2,3]{Fr\'ed\'eric Legoll\footnote{Corresponding author:~\email{frederic.legoll@enpc.fr}}}
\affil[2]{\'Ecole des Ponts ParisTech, Laboratoire Navier - UMR 8205, 6 et 8 avenue Blaise Pascal, 77455 Marne-la-Vall\'ee Cedex 2, France}
\author[1,3]{Simon Lemaire\footnote{\email{simon.lemaire@epfl.ch}}}
\affil[3]{Inria Paris, MATHERIALS project-team, 2 rue Simone Iff, CS 42112, 75589 Paris Cedex 12, France}

\begin{document}

\maketitle

\begin{abstract}
We approximate an elliptic problem with oscillatory coefficients using a problem of the same type, but with constant coefficients. We deliberately take an engineering perspective, where the information on the oscillatory coefficients in the equation can be incomplete. A theoretical foundation of the approach in the limit of infinitely small oscillations of the coefficients is provided, using the classical theory of homogenization. We present a comprehensive study of the implementation aspects of our method, and a set of numerical tests and comparisons that show the potential practical interest of the approach. The approach detailed in this article improves on an earlier version briefly presented in~\cite{LBLeL:13}.
\end{abstract}

\section{Introduction} \label{se:intro}

\subsection{Context}

Consider the simple, linear, elliptic equation
\begin{equation} \label{eq:osc.0}
-\div(A_\eps\grad u_\eps) = f \ \ \text{in ${\cal D}$}, \qquad u_\eps = 0 \ \ \text{on $\partial {\cal D}$},
\end{equation}
in divergence-form, where ${\cal D}\subset\R^d$, $d\geq 1$, is an open, bounded domain which delimits what we hereafter call 'the physical medium', and where $A_\eps$ is a possibly random oscillatory matrix-valued coefficient. We suppose that all the requirements are satisfied so that problem~\eqref{eq:osc.0} is well-posed. In particular, we assume that $A_\eps$ is bounded and bounded away from zero uniformly in $\eps$. Our assumptions will be detailed in Section~\ref{sse:ass} below. The subscript $\eps$ encodes the characteristic scale of variation of the matrix field $A_\eps$. For instance, one may think of the case $A_\eps(\vec{x})=A^{\rm per}(\vec{x}/\eps)$ for a fixed $\Z^d$-periodic matrix field $A^{\rm per}$, although all what follows is not restricted to that particular case.

It is well-known that, for $\eps$ small (comparatively to the size of ${\cal D}$), and not necessarily infinitesimally small, the direct computation of the solution to~\eqref{eq:osc.0} is expensive since, in order to capture the oscillatory behavior of $A_\eps$ and $u_\eps$, one has to discretize the domain ${\cal D}$ with a meshsize $h\ll\eps$. The computation becomes prohibitively expensive in a multi-query context where the solution $u_\eps(f)$ is needed for a large number of right-hand sides $f$ (think,~e.g., of a time-dependent model where~\eqref{eq:osc.0}, or a similar equation, should be solved at each time step~$t^n$ with a right-hand side $f(t^n)$, or of an optimization loop with $f$ as an unknown variable, where~\eqref{eq:osc.0} would encode a distributed constraint). Alternatives to the direct computation of~$u_\eps$ exist. Depending on the value of $\eps$, the situation is schematically as follows.

\begin{itemize}
\item[$\bullet$] For $\eps<\overline{\eps}$, where $\overline{\eps}$ is a given, medium-dependent threshold (typically $\overline{\eps}\approx{\rm size}({\cal D})/10$), one can consider that homogenization theory~\cite{BeLiP:78,JiKoO:94,Tarta:10} provides a suitable framework to address problem~\eqref{eq:osc.0}. That theory ensures the existence of a limit problem for infinitely small oscillations of the coefficient $A_\eps$. The limit problem reads
\begin{equation} \label{eq:hom.0}
-\div(A_\star\grad u_\star) = f \ \ \text{in ${\cal D}$}, \qquad u_\star = 0 \ \ \text{on $\partial {\cal D}$}. 
\end{equation}
The matrix-valued coefficient $A_\star$ is (i) non-oscillatory, (ii) independent of $f$, and (iii) given by an abstract definition that can become more or less explicit, depending on the assumptions concerning the structure of $A_\eps$ (and the probabilistic setting in the random case). The solution to the homogenized problem~\eqref{eq:hom.0} can be considered an accurate $L^2$-approximation of the oscillatory solution to~\eqref{eq:osc.0} as soon as the size $\eps$ of the oscillations of $A_\eps$ is sufficiently small.

There are several cases for which the abstract definition giving $A_\star$ can be made explicit. The simplest examples are (i) periodic coefficients of the form $A_\eps(\vec{x})=A^{\rm per}(\vec{x}/\eps)$, with $A^{\rm per}$ a $\Z^d$-periodic matrix field, and (ii) stationary ergodic coefficients of the form $A_\eps(\vec{x},\omega)=A^{\rm sto}(\vec{x}/\eps,\omega)$, with $A^{\rm sto}$ a (continuous or discrete) stationary matrix field. In both cases, one can prove that $A_\star$ is a deterministic constant (i.e.~independent of $\vec{x}$) matrix, for which a simple explicit expression is available. Whenever a corrector (in the terminology of homogenization theory, see~\cite{BeLiP:78,JiKoO:94,Tarta:10} and~\eqref{eq:corr.sta}--\eqref{eq:corr.per} below) exists, it is in addition possible to reconstruct an $H^1$-approximation of the solution to~\eqref{eq:osc.0}, using the solutions to the corrector problem and to the homogenized problem~\eqref{eq:hom.0}.

Practically, whenever an explicit definition is available for $A_\star$, one can compute an approximation of the oscillatory solution to~\eqref{eq:osc.0} by solving the non-oscillatory problem~\eqref{eq:hom.0}. The advantage is obviously that the latter can be solved on a coarse mesh. The cost of the method then lies in the offline computation of $A_\star$.

\item[$\bullet$] For $\eps\geq\overline{\eps}$, the size of the oscillations is too large to consider that homogenization theory provides a suitable framework to approximate problem~\eqref{eq:osc.0}, and one may use, in order to efficiently compute an approximation of $u_\eps$, dedicated numerical approaches.

Classical examples include the Variational Multiscale Method (VMM) introduced by Hughes et~al.~\cite{HuFeM:98}, and the Multiscale Finite Element Method (MsFEM) introduced by Hou and Wu~\cite{HouWu:97} (see also the textbook~\cite{EfHou:09}). We also refer to the more recent works by M\r{a}lqvist and Peterseim~\cite{MalPe:14} (on the Local Orthogonal Decomposition (LOD) method), or Kornhuber and Yserentant~\cite{KorYs:15}, on localization and subspace decomposition. Many more examples of approaches are available in the literature.

The MsFEM approach (as well as the LOD approach) is essentially based on an offline/online decomposition of the computations. In the first step, local problems are solved at the microscale, in order to compute oscillatory basis functions. Each basis function is obtained by solving an oscillatory problem posed on a macro-element or on a patch of macro-elements. These oscillatory problems do not depend on the right-hand side $f$, and are independent one from another. In the second step, the global problem, which depends on the right-hand side $f$, is solved. The second step is performed,~e.g., by considering a Galerkin approximation on the multiscale discrete space built in the offline step. The original online cost of solving an oscillatory problem on a fine mesh (using a discrete space at one single fine scale) is reduced to solving an oscillatory problem on a coarse mesh consisting of macro-elements (using a multiscale discrete space).

These methods provide an $H^1$-approximation of the oscillatory solution $u_\eps$. Note that they are (a priori) applicable without any restriction on the structure of $A_\eps$, and are also applicable, and indeed applied, in the regime $\eps<\overline{\eps}$. Note also that, in the stochastic setting, the computations must be performed $\omega$ by $\omega$, for ``each'' realization $\omega$ of the random environment.

The finite element Heterogeneous Multiscale Method (HMM) introduced by E~and~Engquist~\cite{EEngq:03} 
is another popular multiscale technique. It is however based on a different perspective. Its aim is to compute an approximation of the coarse solution $u_\star$ by means of local averages of the oscillatory coefficient $A_\eps$. 
\end{itemize}

One way or another, all these approaches rely on the knowledge of the coefficient $A_\eps$. It turns out that there are several contexts where such a knowledge is incomplete, or sometimes merely unavailable. From an engineering perspective (think,~e.g., of experiments in Mechanics), there are numerous prototypical situations where the response $u_\eps(f)$ can be measured for {\em some} loadings $f$, but where $A_\eps$ is {\em not} completely known. In these situations, it is thus not possible to use homogenization theory, nor to proceed with any MsFEM-type approach or with the similar approaches mentioned above.

\medskip

We have discussed above two possibilities to address multiscale problems such as~\eqref{eq:osc.0}, using either the homogenization theory or dedicated numerical approaches. Restricting our discussion to homogenization theory, we can identify three limitations, quite different in nature, to the practical application of the theory:
\begin{itemize}
\item[$\bullet$] First, homogenization theory has been developed in order to address the case of infinitely small oscillations of the coefficients, and is hence not appropriate for media such that $\eps\geq\overline{\eps}$. In practice, one may for instance want to evaluate the effective coefficients (such as the Poisson ratio and the Young modulus for problems in Mechanics) of a medium for which $\eps\geq\overline{\eps}$. It is always possible (if an explicit definition is available) to compute $A_\star$, considering on purpose the (fictitious) limit of infinitely small oscillations, but there is no reason for that $A_\star$ to be an accurate approximation of the medium it is supposed to describe. 
\item[$\bullet$] Assume that an explicit expression is available for $A_\star$. A practical limitation is that, in most cases except for the somewhat ideal case of periodic coefficients (with a known period), the computation of $A_\star$ by classical methods is expensive. For instance, in the stochastic setting, the computation of $A_\star$ requires to solve, many times, a corrector problem set on a truncated approximation of an asymptotically infinitely large domain. This is especially challenging in the stationary ergodic case with long-range correlations. Note that equivalent limitations appear for MsFEM-type or similar approaches in the stochastic setting.
\item[$\bullet$] Another evident limitation shows up when one examines the homogenized limit of~\eqref{eq:osc.0} for a coefficient $A_\eps$ such that no explicit expression is available for $A_\star$ (although $A_\eps$ is well-known, and although the homogenized limit of~\eqref{eq:osc.0} is known to read as~\eqref{eq:hom.0}). This case might occur as soon as $A_\eps$ is not the rescaling $A(\cdot/\eps)$ of a simple (periodic, quasi-periodic, random stationary, \dots) function $A$.
\end{itemize}

Finding a pathway alternate to standard approaches is thus a practically relevant question. Given our discussion above, we are interested in approaches valid for the different regimes of~$\eps$, which make no use of the knowledge on the coefficient $A_\eps$, but only use some (measurable) responses of the medium (obtained for certain given solicitations). Questions similar in spirit, but different in practice, have been addressed two decades ago by Durlofsky in~\cite{Durlo:91}. They are similar in spirit because the point is to define an effective coefficient only using outputs of the system. They are however different in practice because the effective matrix is defined by upscaling, and hence the approach of~\cite{Durlo:91} is local. This approach is indeed based on considering, in a representative elementary volume, some particular problems (with zero loading and suitable boundary conditions), for which the solutions in the case of homogeneous coefficients are affine and write as independent of these homogeneous coefficients. Considering $d$ choices of such problems (that is, $d$ choices of boundary conditions), and postulating the equality of the fluxes respectively resulting from the original oscillatory and homogeneous equivalent problems, one determines the coefficients of an ``effective'' matrix. Several variants exist in the literature, as well as many other approaches.

The original approach we introduce in this article improves on an earlier version briefly presented in~\cite{LBLeL:13}. Our approach is global, in the sense that it uses the responses of the system in the {\em whole} domain ${\cal D}$. Note of course that it can be used locally as an upscaling technique, for instance in problems featuring a prohibitively large number of degrees of freedom. 

\medskip

In passing, we note that our approach provides, at least in some settings, a characterization of the homogenized matrix which is an alternative to the standard characterization of homogenization theory (see Proposition~\ref{pr:as.cons} below). To the best of our knowledge, this characterization has never been made explicit in the literature. 

\medskip

Throughout this article, we restrict ourselves to cases when problem~\eqref{eq:osc.0} admits (possibly up to some extraction) a homogenized limit that reads as problem~\eqref{eq:hom.0}, where the homogenized matrix coefficient
\begin{equation*} %\label{eq:ass}
A_\star\text{ is {\em deterministic} and {\em constant}.}
\end{equation*}
This restrictive assumption on the class of $A_\star$ (and thus on the structure of the coefficient $A_\eps$ in~\eqref{eq:osc.0}, and on the probabilistic setting in the random case) is useful for our theoretical justifications, but not mandatory for the approach to be applicable (see Section~\ref{sec:outline} below).

\subsection{Presentation of our approach}
\label{sec:pres}

We now sketch, for a coefficient $A_\eps$ that we take for simplicity deterministic, the idea underlying our approach. Let ${\cal S}$ denote the set of real-valued $d\times d$ positive-definite symmetric matrices.

For any constant matrix $\overline{A} \in {\cal S}$, consider generically the problem with {\em constant} coefficients
\begin{equation} \label{eq:bar.0}
-\div(\overline{A}\grad\overline{u})=f \ \ \text{in ${\cal D}$}, \qquad \overline{u} = 0 \ \ \text{on $\partial {\cal D}$}.
\end{equation}
We investigate, for any value of the parameter $\eps$, how we may define a constant matrix $\overline{A}_\eps \in {\cal S}$ such that the solution $\overline{u}_\eps$ to problem~\eqref{eq:bar.0} with matrix $\overline{A} = \overline{A}_\eps$ best approximates the solution $u_\eps$ to~\eqref{eq:osc.0}. Note that, since $\overline{A}_\eps$ is constant, its skew-symmetric part plays no role in~\eqref{eq:bar.0}. We hence cannot hope for characterizing the skew-symmetric part of $\overline{A}_\eps$. Without loss of generality, we henceforth make the additional assumption that the homogenized matrix $A_\star$ is {\em symmetric} and that we seek a best (constant) symmetric matrix. Should $A_\star$ not be symmetric, it is replaced in the sequel by its symmetric part. In~\cite{LBLeL:13}, the constant matrix $\overline{A}_\eps$ is defined as a minimizer of
\begin{equation} \label{eq:infsup.old}
\inf_{\overline{A} \in {\cal S}} \; \sup_{f\in L^2({\cal D}), \, \norm[\Ltwo]{f}=1} \ \norm[L^2({\cal D})]{u_\eps(f)-\overline{u}(f)}^2,
\end{equation}
where we have emphasized the dependency upon the right-hand side $f$ of the solutions to~\eqref{eq:osc.0} and~\eqref{eq:bar.0}. The use of a $L^2$ norm in~\eqref{eq:infsup.old} (and not of e.g. a $H^1$ norm) is reminiscent of the fact that, for sufficiently small $\eps$, we wish the best constant matrix to be close to $A_\star$, and that $u_\eps$ converges to $u_\star$ in the $L^2$ norm but not in the $H^1$ norm.

Note that problem~\eqref{eq:infsup.old} is only based on the knowledge of the outputs $u_\eps(f)$ (that could be, e.g., experimentally measured), and {\em not} on that of $A_\eps$ itself. Note also that, in practice, we cannot maximize upon all right-hand sides $f$ in $L^2({\cal D})$ (with unit norm). We therefore have to replace the supremum in~\eqref{eq:infsup.old} by a maximization upon a finite-dimensional set of right-hand sides, which we will have to select thoughtfully (see Section~\ref{ssse:sup}). 

In this article, we keep the same type of characterization for $\overline{A}_\eps$ as in~\cite{LBLeL:13} (that is, through an inf-sup problem), but we use a slightly different cost function than in~\eqref{eq:infsup.old}. The constant matrix $\overline{A}_\eps$ is here defined as a minimizer of
\begin{equation} \label{eq:infsup.0}
\inf_{\overline{A} \in {\cal S}} \ \sup_{f\in L^2({\cal D}), \, \norm[\Ltwo]{f}=1} \ \norm[L^2({\cal D})]{(-\Delta)^{-1}\left(\div(\overline{A}\grad u_\eps(f))+f\right)}^2,
\end{equation}
where $(-\Delta)^{-1}$ is the inverse laplacian operator supplied with homogeneous Dirichlet boundary conditions: for any $g\in H^{-1}({\cal D})$, $z = (-\Delta)^{-1}g$ is the unique solution in $H^1_0({\cal D})$ to
\begin{equation*} %\label{eq:inv.lap}
-\Delta z=g \ \ \text{in ${\cal D}$}, \qquad z=0 \ \ \text{on $\partial{\cal D}$}.
\end{equation*}

The cost function of~\eqref{eq:infsup.0} is related to the one of~\eqref{eq:infsup.old} through the application, inside the $L^2$ norm of the latter, of the zero-order differential operator $(-\Delta)^{-1}\left(\div(\overline{A}\grad \cdot)\right)$. Note that, in sharp contrast with~\eqref{eq:infsup.old}, the function $\dis \norm[L^2({\cal D})]{(-\Delta)^{-1}\left(\div(\overline{A}\grad u_\eps(f))+f\right)}^2$ used in~\eqref{eq:infsup.0} is a polynomial function of degree 2 in terms of $\overline{A}$, a property which brings stability and significantly speeds up the computations. The specific choice~\eqref{eq:infsup.0} has been suggested to us by Albert Cohen (Universit\'e Pierre et Marie Curie). 

\begin{remark}
The reason to choose $f \in L^2({\cal D})$ in~\eqref{eq:infsup.0}, rather than $f \in H^{-1}({\cal D})$, is discussed in Remark~\ref{rem:L2} below.
\end{remark}

Several criteria can be considered to assess the quality and the usefulness of our approach:
\begin{enumerate}[(i)]
\item \label{it:as.cons} {\em asymptotic consistency:} does the sequence $\left\{ \overline{A}_\eps \right\}_{\eps>0}$ of best matrices, defined as minimizers of~\eqref{eq:infsup.0}, converge, when $\eps$ goes to $0$, to the homogenized matrix $A_\star$? If this is indeed the case, the approach provides an approximation for the homogenized matrix alternate to standard homogenization (note, in particular, that our approach does not require solving a corrector problem).
\item \label{it:eff} {\em efficiency:} practically, is this best matrix $\overline{A}_\eps$ efficiently computable? In particular, how many right-hand sides does its computation really require?
\item \label{it:L.two} {\em $L^2$-approximation:} for any fixed $\eps$, not necessarily small, how well does the solution $\overline{u}_\eps$ to~\eqref{eq:bar.0} with matrix $\overline{A}_\eps$ approximate the reference solution $u_\eps$ to~\eqref{eq:osc.0} in the $L^2$ norm?
\item \label{it:H.one} {\em $H^1$-approximation:} using $\overline{A}_\eps$, is it possible to reconstruct (if possible for a marginal additional cost) an accurate approximation of $u_\eps$ in the $H^1$ norm? Recall that in homogenization theory, a corrector problem must be solved to compute the homogenized matrix, but once this is performed, one can reconstruct an $H^1$-approximation of $u_\eps$ using the solution of the latter problem at no additional cost.
\end{enumerate}

\subsection{Outline and perspectives}
\label{sec:outline}

The article is organized as follows. To begin with, we introduce in Section~\ref{se:prel} the assumptions we will make throughout the article, and we recall the basics of homogenization. We formalize our approach in Section~\ref{se:forma}. We establish an asymptotic consistency result (thereby positively answering to Question~\eqref{it:as.cons} above, see Proposition~\ref{pr:as.cons}), and we explain how the best matrix we compute can be used to construct an approximation in the $H^1$ norm of the oscillatory solution (hence addressing Question~\eqref{it:H.one} above). We also detail how to approximate the infinite-dimensional space $\dis \left\{f\in L^2({\cal D}), \ \ \norm[\Ltwo]{f}=1 \right\}$ present in~\eqref{eq:infsup.0} by a finite-dimensional space of the form $\dis \text{Span} \, \left\{ f_p, \ 1 \leq p \leq P \right\}$ for some appropriate functions $f_p$ (see~\eqref{eq:VPn} below). In Section~\ref{se:impl}, we explain how the problem of finding the best constant matrix can be efficiently solved in practice (thereby answering to Question~\eqref{it:eff}). 

Finally, in Section~\ref{se:num}, we present, as a practical answer to Questions~\eqref{it:as.cons},~\eqref{it:eff},~\eqref{it:L.two} and~\eqref{it:H.one}, a number of representative numerical experiments, both in the periodic and stationary ergodic settings, and we provide some comparison with the classical homogenization approach. We show in particular that choosing a small number $P$ of right-hand sides (in practice, we often set $P=d(d+1)/2$) is sufficient for our approach to provide accurate results. 

\bigskip

We emphasize that the aim of the numerical experiments described in Section~\ref{se:num} is different in the periodic setting and in the stochastic setting. In the former case, computing the homogenized matrix is inexpensive, and thus we cannot hope for our approach (which requires solving highly oscillatory equations) to outperform the classical homogenization approach in terms of efficiency. The periodic setting is hence to be considered as a validation setting. 

The situation is entirely different in the stochastic setting, which is much more challenging. In that setting, our approach can compete as far as Questions~\eqref{it:eff},~\eqref{it:L.two} and~\eqref{it:H.one} are concerned. We show that, for an essentially identical computational cost compared to the standard homogenization approach, our approach allows us to compute a more accurate approximation of the solution $u_\eps$ to the highly oscillatory equation, both in $L^2$ and in $H^1$ norms.

More importantly, the reader should bear in mind that our approach targets practical situations where the information on the oscillatory coefficients in the equation may be incomplete. The comparison with standard homogenization approaches which is performed in Section~\ref{se:num} is hence somewhat unfair for our approach, as the former approaches need a complete knowledge of the coefficient $A_\eps$, whereas ours does not.

\bigskip

There are several possible follow-ups for this work:
\begin{itemize}
\item First, one can perform a detailed study of the robustness of the approach with respect to imprecise data, assuming for instance that we only have access locally to {\em coarse} averages of the outputs $u_\eps(f)$ or $\grad u_\eps(f)$.
\item Second, the extension to nonlinear equations may be studied, where the oscillatory problem is formulated as the optimization problem
$$
%\begin{equation} \label{eq:var}
\inf\left\{\int_{\cal D}K\left(\frac{\vec{x}}{\eps},\grad u(\vec{x})\right){\rm d}\vec{x}-\int_{\cal D}f(\vec{x})u(\vec{x})\,{\rm d}\vec{x}, \quad u \in W^{1,p}_0({\cal D})\right\},
%\end{equation}
$$
where the function $\vec{\xi} \in \R^d \mapsto K(\cdot,\vec{\xi})$ is strictly convex. In a multi-query context, our approach (and this is also true for other approaches) is even more interesting for nonlinear equations than for linear ones. Indeed, however large the parameter $\eps$ is, solving a nonlinear oscillatory equation for a large number of right-hand sides is prohibitively expensive. In contrast, in the linear case, as soon as the LU decomposition of the stiffness matrix can be computed and stored,~i.e.~as soon as $\eps$ is not too small, the cost for computing several solutions becomes almost equal to the cost for computing one. The computational workload thus remains affordable. This is not the case in a nonlinear context.

\item Third, the approach may be extended to homogenized matrices that are not constant. Indeed, as soon as some additional information is available on $A_\star$, one could adequately modify the search space for $\overline{A}$ in~\eqref{eq:infsup.old} or~\eqref{eq:infsup.0}. For instance, the case of a slowly varying matrix $A_\star(\vec{x})$, depending upon $\vec{x} \in {\cal D}$ in a sense to be made precise, can be considered. Following a suggestion by Albert Cohen, it may also be possible to balance the dimension of the space in which $\overline{A}$ is searched with the amount of noise present in the problem (which is related to the value of $\eps$) and the number of fine-scale solutions that are available (here the dimension $P$ of the space~\eqref{eq:VPn} introduced below).
\end{itemize}

\section{Preliminaries} \label{se:prel}

We describe the stationary ergodic setting we adopt. This setting includes, as a particular case, the periodic case. For a more detailed presentation of the particular stochastic setting we here consider, we refer to the theoretically-oriented articles~\cite{BLBLi:06,BLBLi:07}, to the numerically-oriented articles~\cite{BouPi:04,LeBri:10}, and to the review article~\cite{ACLBL:11} (as well as to the extensive bibliography contained therein). For more insight on stochastic homogenization in general, we refer the reader to the seminal contribution~\cite{PapaV:81}, to~\cite{EnSou:08} for a numerically-oriented presentation, as well as to the classical textbooks~\cite{BeLiP:78,JiKoO:94}. The reader familiar with that theory may easily skip this section and directly proceed to Section~\ref{se:forma}.

\subsection{Assumptions} \label{sse:ass}

Recall that ${\cal D}$ denotes an open, bounded subset of $\R^d$, $d\geq 1$. Let $(\Omega,{\cal Z},\mathbb{P})$ be a probability space, on which we assume an ergodic structure, and let $\displaystyle \mathbb{E}(X) = \int_\Omega X(\omega)\,{\rm d}\mathbb{P}(\omega)$ be the expectation of any random variable $X\in L^1(\Omega,{\rm d}\mathbb{P})$. We consider problem~\eqref{eq:osc.0}, which reads, in the stochastic setting, as
\begin{equation} \label{eq:osc}
-\div(A_\eps(\cdot,\omega)\grad u_\eps(\cdot,\omega))=f \ \ \text{a.s.~in ${\cal D}$}, \qquad u_\eps(\cdot,\omega)=0 \ \ \text{a.s.~on $\partial{\cal D}$},
\end{equation}
where the function $f \in \Ltwo$ is independent of $\eps$ and deterministic (see Remark~\ref{rem:L2} below for a discussion on the choice of taking $f$ in $\Ltwo$).

We assume that
\begin{equation} \label{eq:ass.sta}
A_\eps(\vec{x},\omega)=A^{\rm sto}(\vec{x}/\eps,\omega),
\end{equation}
where $A^{\rm sto}$ is such that there exist deterministic real numbers $\alpha,\beta>0$ such that
\begin{equation} \label{eq:ass.bound.sta}
A^{\rm sto}(\cdot,\omega)\in L^\infty(\R^d;{\cal S}_{\alpha,\beta})\quad\text{almost surely},
\end{equation}
with
\begin{equation*}
{\cal S}_{\alpha,\beta} = \left\{ M\in \R^{d \times d}, \ \ \text{$M$ is symmetric}, \ \ \alpha \, |\vec{\xi}|^2 \leq \vec{\xi}^T M \vec{\xi} \leq \beta \, |\vec{\xi}|^2 \ \text{for any $\vec{\xi}\in\R^d$} \right\}.
\end{equation*}
In addition, we assume that $A^{\rm sto}$ is a discrete stationary matrix field. A complete description of the discrete stationary ergodic setting we here consider can be found,~e.g.,~in the review article~\cite[Section 2.2]{ACLBL:11}. For brevity, we only mention here that the purpose of this setting is to formalize the fact that, even though realizations may vary, the matrix $A^{\rm sto}$ at point $\vec{y} \in \R^d$ and the matrix $A^{\rm sto}$ at point $\vec{y}+\vec{k}$, $\vec{k} \in \Z^d$, share the same probability law. The local, microscopic environment (encoded in the oscillatory matrix field $A_\eps(\vec{x},\omega)=A^{\rm sto}(\vec{x}/\eps,\omega)$) has a $\eps \mathbb{Z}^d$-periodic structure {\em on average}.

Assumption~\eqref{eq:ass.bound.sta} ensures the existence and uniqueness of the solution to~\eqref{eq:osc} in $H^1_0({\cal D})$, almost surely. Furthermore, almost surely, the solution $u_\eps(\cdot,\omega)$ to~\eqref{eq:osc} converges (strongly in $L^2({\cal D})$ and weakly in $H^1({\cal D})$) to some $u_\star \in H^1_0({\cal D})$ solution to~\eqref{eq:hom.0}, where the homogenized matrix $A_\star$ is {\em deterministic}, {\em constant} and belongs to ${\cal S}_{\alpha,\beta}$. As is well-known, $A_\star$ is independent of the right-hand side $f$ in~\eqref{eq:osc}. 

\begin{remark}
The above discussion is not restricted to the {\em discrete} stationary setting. We could as well have considered the {\em continuous} stationary setting, where the probability law of $A(\vec{y},\omega)$ does not depend on $\vec{y}$.
\end{remark}

\begin{remark}
  \label{re:darcy}
The form of the homogenized equation~\eqref{eq:hom.0} is in this context identical to that of the original equation~\eqref{eq:osc.0}. This is not a general fact. Although definite conclusions are yet to be obtained, there are all reasons to believe that the practical approach we introduce in this article carries over to cases where the homogenized equation is of a different form.
\end{remark}

The periodic setting is a particular case of the above discrete stationary setting, when $A$ is independent of $\omega$. This amounts to assuming that 
\begin{equation} \label{eq:ass.per}
A_\eps(\vec{x})=A^{\rm per}(\vec{x}/\eps),
\end{equation}
with $A^{\rm per}$ a $\Z^d$-periodic matrix field such that
\begin{equation} \label{eq:ass.bound.per}
A^{\rm per}\in L^\infty(\R^d;{\cal S}_{\alpha,\beta}).
\end{equation}

\subsection{Classical homogenization approach} \label{sse:comp.hom}

We briefly recall here the basics of homogenization. We focus the presentation on the stationary ergodic setting. The easy adaptation to the periodic setting is briefly commented upon. 

\medskip

Let $Q = (0,1)^d$. In the discrete stationary ergodic setting, the (deterministic, constant and symmetric) homogenized matrix $A_\star$ reads, for all $1\leq i,j\leq d$, as
\begin{equation} \label{eq:Astar}
\left[ A_\star \right]_{i,j} = \mathbb{E} \left( \int_Q \left( \vec{e}_i + \grad w_{\vec{e}_i}(\vec{y},\cdot) \right)^T \, A^{\rm sto}(\vec{y},\cdot) \, \left( \vec{e}_j + \grad w_{\vec{e}_j}(\vec{y},\cdot) \right) \, {\rm d}\vec{y} \right),
\end{equation}
where $(\vec{e}_1,\ldots,\vec{e}_d)$ denotes the canonical basis of $\R^d$, and where, for any $\vec{p}\in\R^d$, $w_{\vec{p}}$ is the solution (unique up to the addition of a random constant) to the so-called {\em corrector equation}
\begin{equation} \label{eq:corr.sta}
\left\{
\begin{array}{l}
-\div\left(A^{\rm sto}(\cdot,\omega)(\vec{p} + \grad w_{\vec{p}}(\cdot,\omega))\right)=0\quad\text{a.s.~in~$\R^d$},
\\
\grad w_{\vec{p}}\text{ is stationary},\qquad \displaystyle\mathbb{E}\left(\int_Q\grad w_{\vec{p}}(\vec{y},\cdot)\,{\rm d}\vec{y}\right)=0.
\end{array}
\right.
\end{equation}
In the periodic case $A_\eps(\vec{x})=A^{\rm per}(\vec{x}/\eps)$, the corrector equation reads as
\begin{equation} \label{eq:corr.per}
\left\{
\begin{array}{l}
-\div\left(A^{\rm per}(\vec{p} + \grad w_{\vec{p}})\right)=0\quad\text{in~$\R^d$},
\\
w_{\vec{p}}\text{ is $\Z^d$-periodic},
\end{array}
\right.
\end{equation}
and the homogenized matrix $A_\star$ is given by
\begin{equation*} %\label{eq:Astar.per}
\left[A_\star\right]_{i,j} = \int_Q \left( \vec{e}_i + \grad w_{\vec{e}_i}(\vec{y}) \right)^T \, A^{\rm per}(\vec{y}) \, \left( \vec{e}_j + \grad w_{\vec{e}_j}(\vec{y}) \right) \, {\rm d}\vec{y}.
\end{equation*}

In sharp contrast with the periodic case where, precisely by periodicity, it is sufficient to solve the corrector equation~\eqref{eq:corr.per} on the unit cell $Q$, the corrector equation~\eqref{eq:corr.sta} must be solved in the discrete stationary ergodic setting on the entire space $\R^d$. As pointed out in the introduction, this is computationally challenging. In practice, one often considers a truncated corrector equation posed, for an integer $N\neq 0$, on a large domain $Q^N = (-N,N)^d$:
\begin{equation} \label{eq:corr.N}
-\div\left(A^{\rm sto}(\cdot,\omega)(\vec{p} + \grad w^N_{\vec{p}}(\cdot,\omega))\right)=0 \ \ \text{a.s.~in $Q^N$}, \qquad w^N_{\vec{p}}(\cdot,\omega) \text{ is a.s.~$Q^N$-periodic.}
\end{equation}
The random matrix $A_\star^N(\omega)$, approximation of the deterministic homogenized matrix $A_\star$ given by~\eqref{eq:Astar}, is defined, for all $1\leq i,j\leq d$, by
\begin{equation} \label{eq:Astar.N}
\left[A_\star^N(\omega)\right]_{i,j} = \frac{1}{|Q^N|} \int_{Q^N} \left( \vec{e}_i + \grad w^N_{\vec{e}_i}(\vec{y},\omega) \right)^T \, A^{\rm sto}(\vec{y},\omega) \, \left( \vec{e}_j + \grad w^N_{\vec{e}_j}(\vec{y},\omega) \right) \, {\rm d}\vec{y}.
\end{equation}
Almost surely, it converges, in the limit of infinitely large domains $Q^N$, i.e. when $N\to +\infty$, to the (deterministic) matrix $A_\star$ (see~\cite{BouPi:04}). Since $A_\star^N(\omega)$ is random, it is natural to consider $M$ independent and identically distributed (i.i.d.) realizations of the field $A^{\rm sto}$, say $\left\{ A^{\rm sto}(\cdot,\omega_m) \right\}_{1\leq m\leq M}$, solve~\eqref{eq:corr.N} and compute~\eqref{eq:Astar.N} for each of them, and define
\begin{equation} \label{eq:Astar.N.M}
A_\star^{N,M} = \frac{1}{M}\sum_{m=1}^M A_\star^N(\omega_m)
\end{equation}
as a practical approximation to $A_\star$. Owing to the strong law of large numbers, we have that $\dis \lim_{N \to \infty} \lim_{M \to \infty} A_\star^{N,M} = A_\star$ almost surely.

\section{Formalization of our approach} \label{se:forma}

The approach we introduce below applies, up to minor changes, to both the periodic and the stationary ergodic settings. We however recall from Section~\ref{sec:outline} that only the stochastic setting (and more difficult cases) is practically relevant for our approach. For simplicity and clarity, we first present the full study of the approach in the periodic setting (see Sections~\ref{sse:infsup}, \ref{sse:as.cons} and~\ref{sse:hone}). We next discuss its extension to the stationary ergodic setting in Section~\ref{sse:stoch}.

\subsection{{\rm Inf}\,$\sup$ formulation} \label{sse:infsup}

As exposed in the introduction and expressed in formula~\eqref{eq:infsup.0}, we are going to seek a constant, symmetric, positive-definite matrix $\overline{A}_\eps$, so that problem~\eqref{eq:bar.0} with matrix $\overline{A}_\eps$ best approximates problem~\eqref{eq:osc.0}. To do so, we consider the problem introduced in~\eqref{eq:infsup.0}, that is
\begin{equation} \label{eq:infsup}
I_\eps = \inf_{\overline{A}\in{\cal S}} \ \sup_{f\in\Ltwon} \ \Phi_\eps(\overline{A},f),
\end{equation}
where $\Ltwon = \left\{f\in\Ltwo, \ \ \norm[\Ltwo]{f}=1\right\}$ and where, for any $\overline{A} \in \R^{d \times d}_{\rm sym}$ (the space of $d \times d$ real symmetric matrices) and any $f\in\Ltwo$,
\begin{equation} \label{eq:Phi}
\Phi_\eps(\overline{A},f) = \norm[\Ltwo]{ (-\Delta)^{-1} \left( \div ( \overline{A} \grad u_\eps(f) ) + f \right) }^2.
\end{equation}
Note that formula~\eqref{eq:Phi} is well-defined since $\div(\overline{A}\grad u_\eps(f))$ clearly belongs to $H^{-1}({\cal D})$ for all $\overline{A} \in \R^{d \times d}_{\rm sym}$ and $f\in\Ltwo$. We observe, as briefly mentioned in Section~\ref{sec:pres}, that the cost function $\Phi_\eps(\cdot,f)$ depends quadratically upon $\overline{A}$. From a computational viewpoint, in an iterative algorithm solving~\eqref{eq:infsup.0} or~\eqref{eq:infsup} that successively optimizes on $f$ and $\overline{A}$, minimizing $\Phi_\eps$ with respect to $\overline{A}$ for a fixed $f\in\Ltwon$ thus reduces to the simple inversion of a small linear system with $d(d+1)/2$ unknowns (see Section~\ref{ssse:inf.app}). This is in sharp contrast with our former formulation~\eqref{eq:infsup.old}. Of course, in both formulations~\eqref{eq:infsup.old} or~\eqref{eq:infsup.0}, for $\eps$ fixed, it is not guaranteed that our numerical algorithm captures the value $I_\eps$ defined by~\eqref{eq:infsup}. It only captures an approximation of it.

For both approaches~\eqref{eq:infsup.old} and~\eqref{eq:infsup.0}, one can prove an asymptotic consistency result for the sequence $\left\{ \overline{A}_\eps \right\}_{\eps>0}$: see Proposition~\ref{pr:as.cons} below in the case of~\eqref{eq:infsup.0} and~\cite{LBLeL:13} in the case of~\eqref{eq:infsup.old}. As the proof is essentially identical for both approaches, we only detail it for the present choice~\eqref{eq:infsup.0} (see Appendix~\ref{se:proof} below) and briefly point out to the case~\eqref{eq:infsup.old} considered in~\cite{LBLeL:13} in Remark~\ref{re:cras} below. 

\medskip

In order to gain further insight, and before stating the asymptotic consistency result, we first study, separately and for a fixed value of $\eps$, the maximization and minimization problems involved in~\eqref{eq:infsup}.

\subsubsection{The $\sup$ problem} \label{ssse:sup}

We show here that, for any fixed $\overline{A}\in{\cal S}$, the maximization problem over $f$ that is involved in~\eqref{eq:infsup}, namely $\dis \sup_{f\in\Ltwon} \ \Phi_\eps(\overline{A},f)$, is attained, and discuss how it can be solved in practice.

Let $\overline{A}\in{\cal S}$ be given. We introduce the notation
$$
%\begin{equation} \label{eq:lap.Abar}
\Delta_{\overline{A}} = \div(\overline{A}\grad\cdot),
%\end{equation}
$$
and let $(-\Delta_{\overline{A}})^{-1}$ be the operator defined by: for any $g\in H^{-1}({\cal D})$, $z = (-\Delta_{\overline{A}})^{-1}g$ is the unique solution in $H^1_0({\cal D})$ to
\begin{equation*} %\label{eq:inv.lap}
-\div(\overline{A}\grad z) = g \ \ \text{in ${\cal D}$}, \qquad z=0 \ \ \text{on $\partial{\cal D}$}.
\end{equation*}
We denote by ${\rm L}^{-1}_{\eps}$ the linear, compact and positive-definite operator from $\Ltwo$ to $\Ltwo$ such that, for any $f\in\Ltwo$, ${\rm L}^{-1}_{\eps}f = u_\eps(f)$, where $u_\eps(f)$ is the unique solution in $H^1_0({\cal D})$ to~\eqref{eq:osc.0}. Starting from~\eqref{eq:Phi}, it can be easily shown that
\begin{equation} \label{eq:Phi.op}
\Phi_\eps(\overline{A},f)=\int_{\cal D}{\cal H}_\eps^{\overline{A}}(f)\;f,
\end{equation}
where 
\begin{equation} \label{eq:Heps}
{\cal H}_\eps^{\overline{A}}(f) = \Big( \left( {\rm L}^{-1}_{\eps} \right)^\star \, \Delta_{\overline{A}}  \, (-\Delta)^{-1} + (-\Delta)^{-1} \Big) \Big( (-\Delta)^{-1}  \, \Delta_{\overline{A}} \, {\rm L}^{-1}_{\eps} + (-\Delta)^{-1} \Big) \, f
\end{equation}
is a compact, self-adjoint and positive semi-definite linear operator from $\Ltwo$ to $\Ltwo$. 
The eigenvalues of ${\cal H}_\eps^{\overline{A}}$ are thus nonnegative real numbers forming a sequence that converges to zero. We denote by $\lambda_{\eps,{\rm m}}^{\overline{A}}$ and $f_{\eps,{\rm m}}^{\overline{A}}$ the largest eigenvalue of ${\cal H}_\eps^{\overline{A}}$ and an associated normalized eigenvector, respectively. In view of~\eqref{eq:Phi.op}, we have
$$
\sup_{f\in\Ltwon}\Phi_\eps(\overline{A},f)=\lambda_{\eps,{\rm m}}^{\overline{A}}
$$
and the supremum is attained at $f_{\eps,{\rm m}}^{\overline{A}}$, which is hence a solution to the $\sup$ problem involved in~\eqref{eq:infsup}.

\medskip

In practice, instead of looking for the largest eigenvalue (and the associated eigenvector) of ${\cal H}_\eps^{\overline{A}}$ in the infinite-dimensional space $\Ltwon$, our approach consists in approximating this space $\Ltwon$ by a finite-dimensional subspace of the form
\begin{equation} \label{eq:VPn}
V^P_{\rm n}({\cal D}) = \left\{ f\in\Ltwon \ \text{s.t. there exists} \ \vec{c} = \{ c_p \}_{1\leq p\leq P} \in \R^P, \ \ |\vec{c}|^2=1, \ \ f=\sum_{p=1}^Pc_pf_p\right\},
\end{equation}
where $(f_1,\ldots,f_P)$ is an orthonormal family of functions in $\Ltwo$. 

\medskip

We discuss the choice of the dimension $P$ and of the family of functions $\{ f_p \}_{1\leq p\leq P}$. First of all, in the light of Lemma~\ref{le:recons} below (see also Section~\ref{sse:as.cons}), it seems in order to choose the dimension of $V^P_{\rm n}({\cal D})$ such that $\displaystyle P \geq d(d+1)/2$.

We now proceed, considering the regime $\eps$ small. Let $\overline{A} \neq A_\star$ be fixed. Homogenization theory states that, for $\eps$ sufficiently small, the operator ${\rm L}_\eps^{-1}$ (considered as an operator from $\Ltwo$ to $\Ltwo$) is close to the operator $(-\Delta_{A_\star})^{-1}$. Thus the operator ${\cal H}_\eps^{\overline{A}}$ defined by~\eqref{eq:Heps} is expected to be well-approximated by
\begin{equation} \label{eq:op.lim}
{\cal H}^{\overline{A}}_\star = 
\Big( (-\Delta_{A_\star})^{-1} \, \Delta_{\overline{A}} \, (-\Delta)^{-1} + (-\Delta)^{-1} \Big) \Big( (-\Delta)^{-1} \, \Delta_{\overline{A}} \, (-\Delta_{A_\star})^{-1} + (-\Delta)^{-1} \Big).
\end{equation}
Up to the extraction of a subsequence, the eigenvector $f^{\overline{A}}_{\eps,{\rm m}}$ we are seeking thus satisfies, by homogenization theory on eigenvalue problems,
$$
\lim_{\eps\to 0}\norm[\Ltwo]{f^{\overline{A}}_{\eps,{\rm m}}-f^{\overline{A}}_{\star,{\rm m}}}=0,
$$
where $f^{\overline{A}}_{\star,{\rm m}}$ is a normalized eigenvector associated with the largest eigenvalue of ${\cal H}^{\overline{A}}_\star$. In view of the expression~\eqref{eq:op.lim} of the limit operator, it seems natural to choose for the family of functions $\{ f_p \}_{1\leq p\leq P}$ the first $P$ (normalized) eigenvectors of the laplacian operator in the domain ${\cal D}$. For small values of $\eps$, say $\eps<\overline{\eps}$, we show that considering $\displaystyle P = d(d+1)/2$ functions $f_p$ is sufficient. This threshold $d(d+1)/2$ is at least intuitive thinking at the case of a constant symmetric matrix $\overline{A}$ and the set of equations $\dis \sum_{1 \leq i,j \leq d} -\overline{A}_{i,j} \ \partial_{ij} u_p = f_p$. In order to determine the $d(d+1)/2$ coefficients $\overline{A}_{i,j}$, the correct number of right-hand sides $f_p$ to consider is $d(d+1)/2$. The fact that it is indeed sufficient is made precise in the proof of Proposition~\ref{pr:as.cons} below (see in particular Lemma~\ref{le:recons}) and in Remark~\ref{re:infmax} below.

When the parameter $\eps$ takes larger values, say $\eps\geq\overline{\eps}$, the operator ${\cal H}^{\overline{A}}_\eps$ cannot be anymore approximated by the operator~\eqref{eq:op.lim} (with constant coefficients), and it may thus be necessary in that case to consider a larger number $\displaystyle P > d(d+1)/2$ of functions. We refer to Section~\ref{se:num} for concrete examples.

\begin{remark}
\label{rem:L2}
We discuss here why we have chosen to work with right-hand sides $f$ of the equation (e.g.~\eqref{eq:osc}) in $\Ltwo$ rather than in $H^{-1}({\cal D})$. We have here considered $\dis \sup_{f \in L^2({\cal D})} \ \frac{\Phi_\eps(\overline{A},f)}{\| f \|^2_{L^2({\cal D})}}$, and we could have considered $\dis \sup_{f \in H^{-1}({\cal D})} \ \frac{\Phi_\eps(\overline{A},f)}{\| f \|^2_{H^{-1}({\cal D})}}$.

Since $\Ltwo \subset H^{-1}({\cal D})$, we of course have $\dis \sup_{f \in H^{-1}({\cal D})} \ \frac{\Phi_\eps(\overline{A},f)}{\| f \|^2_{H^{-1}({\cal D})}} \geq \sup_{f \in L^2({\cal D})} \ \frac{\Phi_\eps(\overline{A},f)}{\| f \|^2_{H^{-1}({\cal D})}}$. Using the density of $\Ltwo$ in $H^{-1}({\cal D})$ and the continuity of $\Phi_\eps(\overline{A},\cdot)$ in $H^{-1}({\cal D})$, we actually get 
\begin{equation}
\label{eq:ehoh}
\sup_{f \in H^{-1}({\cal D})} \frac{\Phi_\eps(\overline{A},f)}{\| f \|^2_{H^{-1}({\cal D})}} = \sup_{f \in L^2({\cal D})} \frac{\Phi_\eps(\overline{A},f)}{\| f \|^2_{H^{-1}({\cal D})}}.
\end{equation}
The right-hand side of~\eqref{eq:ehoh} is of course different from the quantity $\dis \sup_{f \in L^2({\cal D})} \frac{\Phi_\eps(\overline{A},f)}{\| f \|^2_{L^2({\cal D})}}$, which we have considered in this article. Our choice is motivated by the fact that it is easier {\em in practice} to manipulate functions of unit $L^2$-norm. From the {\em theoretical} viewpoint, similar results would have been obtained with the left-hand side of~\eqref{eq:ehoh}.
\end{remark}

\subsubsection{The $\inf$ problem} \label{ssse:inf}

We discuss here how to efficiently solve the minimization problem over $\overline{A}$ that is involved in~\eqref{eq:infsup}, namely 
\begin{equation}
\label{eq:tutu}
\inf_{\overline{A}\in{\cal S}} \ \Phi_\eps(\overline{A},f).
\end{equation}
Let $f\in\Ltwon$ be fixed. It can be easily shown, starting from~\eqref{eq:Phi} and using the linearity of both the divergence and inverse laplacian operators, that
\begin{equation} \label{eq:Phi.tens}
\Phi_\eps(\overline{A},f) = \frac{1}{2} \sum_{1\leq i,j,k,l\leq d} \left[ \mathbb{B}_\eps(f)\right]_{i,j,k,l} \, \overline{A}_{i,j} \, \overline{A}_{k,l}-\sum_{1\leq i,j\leq d} \left[B_\eps(f)\right]_{i,j} \, \overline{A}_{i,j}+b(f),
\end{equation}
where the fourth-order tensor $\mathbb{B}_\eps(f)$, the matrix $B_\eps(f)$ and the scalar $b(f)$, which all depend on $f$, are given, for integers $1\leq i,j,k,l\leq d$, by
\begin{eqnarray*}
\left[\mathbb{B}_\eps(f)\right]_{i,j,k,l}
& = &
\dis 2 \int_{\cal D} \left[ (-\Delta)^{-1}(\partial_{ij} u_\eps(f)) \right] \ \left[ (-\Delta)^{-1}(\partial_{kl} u_\eps(f))\right],
\\
\left[B_\eps(f)\right]_{i,j}
& = & 
\dis -2 \int_{\cal D} \left[ (-\Delta)^{-1}(\partial_{ij} u_\eps(f))\right] \ \left[ (-\Delta)^{-1}f\right],
\\
b(f)
& = &
\norm[\Ltwo]{(-\Delta)^{-1}f}^2.
\end{eqnarray*}
Practically, the $\inf$ problem~\eqref{eq:tutu} (with fixed $f$) is solved on the whole set $\R^{d \times d}_{\rm sym}$ of symmetric matrices, instead of considering the subset ${\cal S}$ of positive-definite symmetric matrices. Under this simplification, solving the $\inf$ problem~\eqref{eq:tutu} amounts to considering the linear system
\begin{equation}
\label{eq:lin.sys-pre}
\forall\,1\leq i, j\leq d, \quad \sum_{1\leq k,l\leq d} \left[\mathbb{B}_\eps(f)\right]_{i,j,k,l} \ \overline{A}_{k,l} = \left[B_\eps(f)\right]_{i,j}.
\end{equation}
This system is low-dimensional and inexpensive to solve. In our numerical experiments, we have observed that the problem~\eqref{eq:lin.sys-pre} always has a unique solution in $\R^{d \times d}_{\rm sym}$, for all the functions $f$ that our algorithm explores. In addition, this solution is in ${\cal S}$.

\subsection{Asymptotic consistency} \label{sse:as.cons}

We study here problem~\eqref{eq:infsup} in the limit of a vanishing parameter $\eps$. We introduce the notation
\begin{equation} \label{eq:Phi.sup}
\Phi_\eps(\overline{A}) = \sup_{f\in\Ltwon}\Phi_\eps(\overline{A},f).
\end{equation}
Note that $\Phi_\eps$ is nonnegative. Consequently, for any $\eps$, problem~\eqref{eq:infsup} admits a quasi-minimizer, namely a matrix $\overline{A}^\flat_\eps\in{\cal S}$ such that
\begin{equation} \label{eq:estim}
I_\eps\leq\Phi_\eps(\overline{A}^\flat_\eps)\leq I_\eps+\eps\leq\Phi_\eps(\overline{A})+\eps\qquad\text{for any }\overline{A}\in{\cal S}.
\end{equation}
The following proposition holds.

\begin{proposition}[Asymptotic consistency, periodic case] \label{pr:as.cons}
Consider problem~\eqref{eq:infsup}, that is 
$$
I_\eps = \inf_{\overline{A}\in{\cal S}}\sup_{f\in\Ltwon}\Phi_\eps(\overline{A},f).
$$ 
In the periodic setting, namely under the assumptions~\eqref{eq:ass.per} and~\eqref{eq:ass.bound.per}, the following convergence holds:
\begin{equation} \label{eq:lim.Ieps}
\lim_{\eps\to 0} I_\eps=0.
\end{equation}
Furthermore, for any sequence $\left\{ \overline{A}^\flat_\eps \in {\cal S} \right\}_{\eps>0}$ of quasi-minimizers of~\eqref{eq:infsup}, we have
\begin{equation} \label{eq:lim.Abareps}
\lim_{\eps\to 0}\overline{A}^\flat_\eps=A_\star.
\end{equation}
\end{proposition}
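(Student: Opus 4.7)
The plan is to use $A_\star$ itself as a near-minimizer to obtain $\lim_{\eps\to 0} I_\eps = 0$, and then to combine a compactness/contradiction argument on the sequence $\{\overline{A}^\flat_\eps\}$ with the Hessian-span uniqueness principle of Lemma~\ref{le:recons} to identify $A_\star$ as the only possible limit.

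For the first convergence, I would evaluate $\Phi_\eps$ at $\overline{A} = A_\star$. Using $-\div(A_\eps\grad u_\eps(f)) = f$, one rewrites $\div(A_\star\grad u_\eps(f)) + f = \div\bigl((A_\star - A_\eps)\grad u_\eps(f)\bigr)$, whence $\Phi_\eps(A_\star,f) = \norm[\Ltwo]{K[(A_\star - A_\eps)\grad u_\eps(f)]}^2$, where $K := (-\Delta)^{-1}\div$ is a compact linear operator from $\bigl[\Ltwo\bigr]^d$ to $\Ltwo$ by Rellich's theorem. To lift the pointwise-in-$f$ decay to the supremum over $f\in\Ltwon$, I argue by contradiction: if $\Phi_\eps(A_\star):=\sup_f\Phi_\eps(A_\star,f)$ does not tend to $0$, extract $\eps_n\to 0$ and $f_n\in\Ltwon$ with $\Phi_{\eps_n}(A_\star,f_n)\geq\delta>0$, and further extract $f_n\rightharpoonup f_\star$ in $\Ltwo$. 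Stability of periodic $H$-convergence under weakly convergent right-hand sides then yields $\grad u_{\eps_n}(f_n)\rightharpoonup\grad u_\star(f_\star)$ and $A_{\eps_n}\grad u_{\eps_n}(f_n)\rightharpoonup A_\star\grad u_\star(f_\star)$ weakly in $\bigl[\Ltwo\bigr]^d$, so $(A_\star - A_{\eps_n})\grad u_{\eps_n}(f_n)\rightharpoonup 0$. Compactness of $K$ turns this into strong $\Ltwo$-convergence of $\Phi_{\eps_n}(A_\star,f_n)$ to $0$, contradicting the lower bound.

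For the second convergence, I would first establish boundedness of $\{\overline{A}^\flat_\eps\}$ by contradiction. Set $\Psi_\eps(\overline{A},f) := (-\Delta)^{-1}\bigl(\div(\overline{A}\grad u_\eps(f)) + f\bigr)$, so that $\norm[\Ltwo]{\Psi_\eps(\overline{A}^\flat_\eps,f)}^2 \leq \Phi_\eps(\overline{A}^\flat_\eps) \leq I_\eps + \eps \to 0$. Renormalize $C_\eps := \overline{A}^\flat_\eps/\norm{\overline{A}^\flat_\eps}$ and extract (boundedness in a finite-dimensional space) $C_\eps \to C_0$ with $\norm{C_0} = 1$. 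Dividing the identity $\Psi_\eps(\overline{A}^\flat_\eps,f) = \norm{\overline{A}^\flat_\eps}\, K[C_\eps\grad u_\eps(f)] + (-\Delta)^{-1}f$ by $\norm{\overline{A}^\flat_\eps}$ and passing to the limit (compact $K$ together with the fixed-$f$ weak convergence $\grad u_\eps(f)\rightharpoonup\grad u_\star(f)$) yields $K[C_0\grad u_\star(f)] = 0$ in $\Ltwo$, i.e., $\div(C_0\grad u_\star(f)) = 0$ in $H^{-1}({\cal D})$ for every $f \in \Ltwo$. Lemma~\ref{le:recons} then forces $C_0 = 0$, contradicting $\norm{C_0} = 1$. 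Once $\{\overline{A}^\flat_\eps\}$ is bounded, I extract $\overline{A}^\flat_\eps \to \overline{A}_0$ (symmetric, positive semi-definite). Passing to the limit directly in $\Psi_\eps(\overline{A}^\flat_\eps,f)$ by the same compactness/homogenization argument gives $\div\bigl((\overline{A}_0 - A_\star)\grad u_\star(f)\bigr) = 0$ for every $f\in\Ltwo$, and Lemma~\ref{le:recons} identifies $\overline{A}_0 = A_\star \in {\cal S}$. Since every subsequential limit equals $A_\star$, the whole sequence converges, yielding~\eqref{eq:lim.Abareps}.

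The main delicate point is the uniform-in-$f$ control in the first step: a naive pointwise-in-$f$ argument only provides $\Phi_\eps(A_\star,f) \to 0$ for each fixed $f$, and passage to the supremum crucially relies on coupling the compactness of $K$ with the stability of $H$-convergence under weakly convergent data. The same compactness/homogenization interplay drives the renormalization argument used to bound $\{\overline{A}^\flat_\eps\}$ in the second step.
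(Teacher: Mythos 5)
Your proof is correct and rests on the same two pillars as the paper's: the test estimate $\Phi_\eps(A_\star)\to 0$ (the paper's Lemma~\ref{le:test}) and the nondegeneracy of the matrix $Z_\star$ built in Lemma~\ref{le:recons}. Where you differ is the packaging. For~\eqref{eq:lim.Ieps}, the paper extracts a weak limit $f_\star^0$ of the maximizing sequence $f_\star^\eps$ and splits $\left(\Phi_\eps(A_\star)\right)^{1/2}$ into three terms $I_1^\eps, I_2^\eps, I_3^\eps$ driven to zero one at a time via energy estimates, Rellich compactness and homogenization at the fixed right-hand side $f_\star^0$; you reach the same conclusion by a contradiction argument that bundles these ingredients into the compactness of $K := (-\Delta)^{-1}\div$ and the stability of $H$-convergence under right-hand sides converging weakly in $\Ltwo$ (hence strongly in $H^{-1}({\cal D})$). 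These are two presentations of the same mechanism. For~\eqref{eq:lim.Abareps}, the routes genuinely diverge: the paper derives the quantitative bound~\eqref{eq:rem}, applies it at $f=f_{\star,k}$ through the algebraic identity~\eqref{eq:Z.bar}, and concludes from $Z_\eps(\overline{\vec{V}}^\flat_\eps-\vec{V}_\star)\to 0$ together with $Z_\eps\to Z_\star$ and uniform boundedness of $Z_\eps^{-1}$ for small $\eps$---so boundedness of $\left\{\overline{A}^\flat_\eps\right\}$ falls out automatically. You instead first prove boundedness by a separate renormalization step (dividing by $\norm{\overline{A}^\flat_\eps}$, passing to the limit, and invoking Lemma~\ref{le:recons} once to kill $C_0$), then extract a convergent subsequence and invoke Lemma~\ref{le:recons} a second time to force $\overline{A}_0=A_\star$. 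Both appeals to the lemma are legitimate because the matrices $C_0$ and $\overline{A}_0-A_\star$ that you feed into $Z_\star$ are symmetric, which is exactly the setting in which $Z_\star$ acts. Your version makes the compactness mechanism more transparent; the paper's $Z_\eps$-based estimate is tighter, avoids the nested extractions and the double use of Lemma~\ref{le:recons}, and is reused almost verbatim in Remark~\ref{rem:preuve_referee} to prove that the infimum is actually attained when $\eps$ is small.
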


The proof of these results, which is postponed until Appendix~\ref{se:proof}, relies on two facts:
\begin{enumerate}
\item The homogenized matrix $A_\star\in{\cal S}_{\alpha,\beta}\subset{\cal S}$ can be used as a test-matrix in~\eqref{eq:estim}. In view of Lemma~\ref{le:test} below, it satisfies $\dis \lim_{\eps\to 0}\Phi_\eps(A_\star)=0$, which directly implies~\eqref{eq:lim.Ieps};
\item We show in Lemma~\ref{le:recons} below that there exist $d(d+1)/2$ right-hand sides $f_{\star,k}\in\Ltwon$ such that the knowledge of $f_{\star,k}$ and of $u_{\star,k}$ solution to~\eqref{eq:hom.0} with right-hand side $f_{\star,k}$, $1 \leq k \leq d \, (d+1)/2$, is sufficient to uniquely reconstruct the constant symmetric matrix $A_\star$. The proof of~\eqref{eq:lim.Abareps} relies on this argument and on~\eqref{eq:lim.Ieps}. We denote
\begin{equation} \label{eq:def_F}
{\cal F} = \Big\{ f_{\star,k}, \quad 1 \leq k \leq d(d+1)/2 \Big\}
\end{equation}
this set. 
\end{enumerate}
We do not know whether, for $\eps$ fixed, the infimum in~\eqref{eq:infsup} is attained, unless $\eps$ is sufficiently small (see Remark~\ref{rem:preuve_referee} in Appendix~\ref{sec:app_preuve} below). We will proceed throughout the article manipulating quasi-minimizers in the sense of~\eqref{eq:estim}.

\begin{remark}
\label{re:cras}
The analysis of the approach~\eqref{eq:infsup.old} introduced in~\cite{LBLeL:13} relies on the same arguments as the approach introduced here: Lemma~\ref{le:recons}, and the equivalent of Lemma~\ref{le:test} for the functional considered in~\cite{LBLeL:13}, that is $\dis \lim_{\eps \to 0} \Psi_\eps(A^\star) = 0$, where, for any $\overline{A} \in {\cal S}$, 
$$
\Psi_\eps(\overline{A}) = \sup_{f\in\Ltwon} \norm[\Ltwo]{u_\eps(f) - \overline{u}(f)}^2.
$$
\end{remark}

\begin{remark}
\label{re:general}
Note that the assumptions~\eqref{eq:ass.per} and~\eqref{eq:ass.bound.per} are not necessary to prove the results~\eqref{eq:lim.Ieps} and~\eqref{eq:lim.Abareps}. All that needs to be assumed is that the sequence of matrices $\{ A_\eps \}_{\eps>0}$ converges, in the sense of homogenization, to a {\em constant} and {\em symmetric} homogenized matrix $A_\star$. 
In that vein, we will see in Section~\ref{sse:stoch} below that the conclusions of Proposition~\ref{pr:as.cons} carry over to the specific stochastic case we consider there.
\end{remark}

\begin{remark} \label{re:infmax}
Consider the set ${\cal F}$ defined by~\eqref{eq:def_F}, and let
\begin{equation} \label{eq:infmax}
I_\eps^{\rm max} = \inf_{\overline{A}\in{\cal S}} \, \max_{f\in{\cal F}} \, \Phi_\eps(\overline{A},f).
\end{equation}
This problem is, in principle, easier to solve than~\eqref{eq:infsup}, as we replaced the supremum over $f\in\Ltwon$ by a maximization over the finite set ${\cal F}$. Let $\dis \Phi^{\rm max}_\eps(\overline{A}) = \max_{f\in{\cal F}}\Phi_\eps(\overline{A},f)$. For any quasi-minimizer $\overline{A}^{\rm max,\flat}_\eps\in{\cal S}$ of~\eqref{eq:infmax}, we have
$$
I_\eps^{\rm max}\leq\Phi^{\rm max}_\eps(\overline{A}^{\rm max,\flat}_\eps)\leq I_\eps^{\rm max}+\eps\leq\Phi^{\rm max}_\eps(A_\star)+\eps\leq\Phi_\eps(A_\star)+\eps.
$$
Since $\dis \lim_{\eps\to 0}\Phi_\eps(A_\star)=0$, we get that $\dis \lim_{\eps\to 0}I_\eps^{\rm max}=0$. In addition, one can show that $\overline{A}^{\rm max,\flat}_\eps\to A_\star$ as $\eps\to 0$ (we refer to Remark~\ref{re:proof} below for details). Similarly to~\eqref{eq:infsup}, the approach~\eqref{eq:infmax} is therefore asymptotically consistent. Note however that, in practice, the functions of the set ${\cal F}$ defined by~\eqref{eq:def_F} are unknown.
\end{remark}

\medskip

We note that Proposition~\ref{pr:as.cons} provides, in the setting described in Section~\ref{sse:ass}, a characterization of the homogenized matrix which is an alternative to the standard characterization of homogenization theory. To the best of our knowledge, this characterization has never been made explicit in the literature. 

\subsection{Approximation of $u_\eps$ in the $H^1$ norm} \label{sse:hone}

As a consequence of Proposition~\ref{pr:as.cons}, we note that $\overline{u}_\eps$, solution to~\eqref{eq:bar.0} with matrix $\overline{A}_\eps$, is an accurate approximation of $u_\eps$ in the $L^2$ norm, but not in the $H^1$ norm. Indeed, when $\eps$ goes to zero, $\overline{A}_\eps$ converges to $A_\star$. Hence, for $\eps$ sufficiently small, $\overline{u}_\eps$ is an accurate $H^1$-approximation of $u_\star$ solution to~\eqref{eq:hom.0}. In addition, from homogenization theory, we know that $u_\star$ is an accurate $L^2$-approximation of $u_\eps$. This implies that $\dis \lim_{\eps \to 0} \| \overline{u}_\eps - u_\eps \|_{L^2({\cal D})} = 0$. 

Note also that $u_\star$ and $u_\eps$ are not close to each other in the $H^1$ norm, and hence $\overline{u}_\eps$ is not an accurate approximation of $u_\eps$ in the $H^1$ norm. We present here an approach to reconstruct such an approximation.

\medskip

In many settings of homogenization theory (and in particular in the periodic setting we consider here), once the corrector problems are solved to compute the homogenized matrix, one can consider the two-scale expansion (truncated at the first-order)
\begin{equation} \label{eq:tse}
u_\eps^{1,\vec{\theta}}(\vec{x})
= 
u_\star(\vec{x}) + \eps \sum_{i=1}^d w^{\theta_i}_{\vec{e}_i}(\vec{x}/\eps) \, \partial_i u_\star(\vec{x}),
\end{equation}
where $w^{\theta_i}_{\vec{e}_i}$ is the unique solution with mean value $\theta_i \in \R$ to the periodic corrector equation~\eqref{eq:corr.per} for $\vec{p}=\vec{e}_i$. It is well-known that this two-scale expansion approximates $u_\eps$ in the $H^1$ norm, in the sense that, under some regularity assumptions (see e.g.~\cite{allaire-amar}), we have
\begin{equation} \label{eq:tse.est}
\norm[H^1({\cal D})]{u_\eps-u_\eps^{1,\vec{\theta}}}\leq C\,\sqrt{\eps}
\end{equation}
for a constant $C$ independent of $\eps$. 

\begin{remark}
\label{rem:renormal}
From the theoretical perspective, the mean value $\vec{\theta}$ of the correctors is irrelevant, and the estimate~\eqref{eq:tse.est} holds for any fixed $\vec{\theta}$. From the numerical perspective, the error $\norm[H^1({\cal D})]{u_\eps-u_\eps^{1,\vec{\theta}}}$ slightly depends on $\vec{\theta}$, in particular when $\eps$ is not asymptotically small. In view of the numerical tests described in Section~\ref{se:num} below (see e.g.~\eqref{eq:per.l2}), we keep track of this parameter.
\end{remark}

Computing the gradient of~\eqref{eq:tse}, we deduce from~\eqref{eq:tse.est} that
\begin{equation} \label{eq:grad.ueps}
\grad u_\eps = C_\eps \, \grad u_\star + \text{h.o.t.},
\end{equation}
where the $d \times d$ matrix $C_\eps$ is given by
\begin{equation} \label{eq:Ceps}
\left[C_\eps\right]_{i,i} = 1 + \partial_i w_{\vec{e}_i}(\cdot/\eps),
\qquad \qquad
\left[C_\eps\right]_{i,j} = \partial_i w_{\vec{e}_j}(\cdot/\eps) \quad \text{if $j \neq i$.}
\end{equation}
Our idea for constructing an approximation of $\grad u_\eps$ is to mimick formula~\eqref{eq:grad.ueps} and seek an approximation under the form $\overline{C}_\eps \grad\overline{u}_\eps$. Once the best matrix $\overline{A}_\eps$ has been computed, we compute a surrogate $\overline{C}_\eps$ of $C_\eps$ by solving the least-squares problem
\begin{equation} \label{eq:hone}
\inf_{\overline{C}\in(\Ltwo)^{d \times d}} \ \ \sum_{r=1}^R\norm[\Ltwo^d]{\grad u_\eps(f_r)-\overline{C} \ \grad\overline{u}_\eps(f_r)}^2
\end{equation}
for a given number $R$ of right-hand sides. 

\medskip

In practice, the right-hand sides $f_r$ selected for~\eqref{eq:hone} are the first $R$ basis functions of the space $V^P_{\rm n}({\cal D})$ defined by~\eqref{eq:VPn}, with $R$ such that
$$
R \leq P.
$$
This choice makes the $H^1$-reconstruction an inexpensive post-processing procedure once the best matrix is computed, as we already have at our disposal $u_\eps(f_r)$ for $1\leq r\leq R$. 

\begin{remark}
In our numerical experiments, we have observed that the surrogate $\overline{C}_\eps$ that we construct is indeed oscillatory, and essentially periodic when $A_\eps$ is periodic. This is expected since $\overline{C}_\eps$ is meant to be an approximation of $C_\eps$.
\end{remark}

In practice, we independently identify each row of $\overline{C}_\eps$, by considering (for any $1 \leq i \leq d$) the least-squares problem 
$$
\inf_{\overline{c}^i \in (\Ltwo)^d} \ \ \sum_{r=1}^R \norm[\Ltwo]{\partial_i u_\eps(f_r) - \overline{c}^i \cdot \grad\overline{u}_\eps(f_r)}^2.
$$
We next define the matrix $\overline{C}_\eps$ by $\left[ \overline{C}_\eps \right]_{i,j} = \left[ \overline{c}^i_\eps \right]_j$. In our numerical experiments, the functions $u_\eps$ and $\overline{u}_\eps$ are approximated by $u_{\eps,h}$ and $\overline{u}_{\eps,h}$ using a $\mathbb{P}^1$ Finite Element Method, and $\overline{c}^i_\eps$ is searched as a piecewise constant function. The value of $\overline{c}^i_\eps$ on an element $T$ is defined by the problem
\begin{equation} \label{eq:min.hone}
\inf_{\overline{\vec{c}}^i_T \in \R^d} \ \ \sum_{r=1}^R \left| \left[\partial_i u_{\eps,h}(f_r)\right]_{\mid T} - \overline{\vec{c}}^i_T \cdot \left[\grad \overline{u}_{\eps,h}(f_r)\right]_{\mid T} \right|^2,
\end{equation}
where the restrictions of $\partial_i u_{\eps,h}$ and $\grad \overline{u}_{\eps,h}$ to any element $T$ are constant. This problem is ill-posed if $R < d$, since, in this case, there exists vectors in $\R^d$ orthogonal to all $\left[\grad \overline{u}_{\eps,h}(f_r)\right]_{\mid T}$, $1 \leq r \leq R$. We thus always take $R \geq d$. To avoid technicalities related to the $\mathbb{P}^1$ discretization of $\overline{u}_\eps$, only mesh elements {\em not} contiguous to the boundary of ${\cal D}$ are considered in the minimization~\eqref{eq:min.hone}.

\subsection{The stationary ergodic setting} \label{sse:stoch}

We have focused in Sections~\ref{sse:infsup},~\ref{sse:as.cons} and~\ref{sse:hone} on the periodic setting. We now briefly turn to the stochastic ergodic setting. We introduce the {\em modified} cost function $\Phi^{\rm sto}_\eps$ defined, for any $\overline{A} \in \R^{d \times d}_{\rm sym}$ and $f\in\Ltwo$, by
\begin{equation} \label{eq:Phi.sto}
\Phi^{\rm sto}_\eps(\overline{A},f) = \norm[\Ltwo]{(-\Delta)^{-1}\left[\div\left(\overline{A}\grad\mathbb{E}(u_\eps(f))\right)+f\right]}^2.
\end{equation}
Note that $\Phi^{\rm sto}_\eps$ is a deterministic quantity. The difference with the cost function $\Phi_\eps$ defined by~\eqref{eq:Phi} in a deterministic context is that $\Phi^{\rm sto}_\eps$ involves $\mathbb{E}(u_\eps(f))$ rather than $u_\eps(f)$.

We next amend the $\inf\sup$ problem~\eqref{eq:infsup} in the following way. For a given value of $\eps$, we look for a best {\em deterministic} matrix $\overline{A}_\eps\in{\cal S}$ that solves the problem
\begin{equation} \label{eq:infsup.sto}
I^{\rm sto}_\eps = \inf_{\overline{A}\in{\cal S}}\,\sup_{f\in\Ltwon}\Phi^{\rm sto}_\eps(\overline{A},f).
\end{equation}
All the considerations of Sections~\ref{sse:infsup},~\ref{sse:as.cons} and~\ref{sse:hone} carry over, up to minor adjustments, to the present stochastic setting. Under assumptions~\eqref{eq:ass.sta} and~\eqref{eq:ass.bound.sta}, asymptotic consistency can be proved for any sequence $\{ \overline{A}^\flat_\eps \in {\cal S} \}_{\eps>0}$ of quasi-minimizers of~\eqref{eq:infsup.sto}. The adaptation of the proof of Proposition~\ref{pr:as.cons} to the stochastic setting is straightforward. It relies on the fact that, for any $f\in\Ltwo$, $\mathbb{E}(u_\eps(f))$ is bounded in $H^1({\cal D})$. Indeed, using that $\alpha \leq A_\eps(\cdot,\omega) \leq \beta$ almost surely, we have $\dis \norm[H^1({\cal D})]{u_\eps(\cdot,\omega)} \leq \frac{C}{\alpha} \norm[L^1({\cal D})]{f}$ almost surely (where $C$ is a deterministic constant only depending on ${\cal D}$), hence $\mathbb{E} \left[ \norm[H^1({\cal D})]{u_\eps}^2 \right]$ is bounded. Using the Cauchy-Schwarz inequality, we infer that $\mathbb{E}(u_\eps(f))$ is indeed bounded in $H^1({\cal D})$. We eventually get that $\grad\mathbb{E}(u_\eps(f))$ weakly converges, and $\mathbb{E}(u_\eps(f))$ strongly converges, in $\Ltwo$ and when $\eps$ goes to zero, to $\grad u_\star(f)$ and $u_\star(f)$, respectively, where $u_\star(f)$ is the solution to~\eqref{eq:hom.0}. 

\medskip

The $H^1$-reconstruction procedure presented in Section~\ref{sse:hone} is adapted to the stationary ergodic setting as follows. It is known that, almost surely, $u_\eps(\cdot,\omega)$ weakly converges in $H^1({\cal D})$ towards $u_\star$ when $\eps$ goes to zero. As in the periodic setting, the correctors allow to obtain a strong convergence in $H^1({\cal D})$, in the sense that (see~\cite[Theorem 3]{PapaV:81})
\begin{equation}
\label{eq:orlando}
\lim_{\eps \to 0} \mathbb{E} \left[ \norm[H^1({\cal D})]{u_\eps(\cdot,\omega) - u_\eps^1(\cdot,\omega)}^2 \right] = 0,
\end{equation}
with 
\begin{equation}
\label{eq:orlando2}
u_\eps^1(\vec{x},\omega)
= 
u_\star(\vec{x}) + \eps \sum_{i=1}^d w_{\vec{e}_i}(\vec{x}/\eps,\omega) \, \partial_i u_\star(\vec{x}),
\end{equation}
where $w_{\vec{e}_i}$ is the unique solution with vanishing mean value to the stochastic corrector equation~\eqref{eq:corr.sta} for $\vec{p}=\vec{e}_i$ (in contrast to the periodic case, see Remark~\ref{rem:renormal}, we only consider here correctors with {\em vanishing} mean, for the sake of simplicity). 

The equations~\eqref{eq:orlando}--\eqref{eq:orlando2} imply that
$$
\mathbb{E} \left[ \grad u_\eps(\cdot,\omega) \right] = C_\eps \, \grad u_\star + \text{h.o.t.},
$$
where the $d \times d$ matrix $C_\eps$ is given by
\begin{equation} \label{eq:Ceps_sto}
\left[C_\eps \right]_{i,i} = 1 + \mathbb{E} \left[ \partial_i w_{\vec{e}_i}(\cdot/\eps,\omega) \right],
\qquad \qquad
\left[C_\eps \right]_{i,j} = \mathbb{E} \left[ \partial_i w_{\vec{e}_j}(\cdot/\eps,\omega) \right] \quad \text{if $j \neq i$.}
\end{equation}
We have chosen to look for an approximation of $\mathbb{E}(\grad u_\eps)$ as follows. Once the best matrix $\overline{A}_\eps$ has been computed, we compute a surrogate $\overline{C}_\eps$ of $C_\eps$ by solving the least-squares problem
\begin{equation} \label{eq:hone_sto1}
\inf_{\overline{C}\in(\Ltwo)^{d \times d}} \ \ \sum_{r=1}^R \norm[\Ltwo^d]{\grad \mathbb{E} \left[ u_\eps(f_r) \right] -\overline{C} \ \grad\overline{u}_\eps(f_r)}^2
\end{equation}
for a given number $R$ of right-hand sides, which are selected as in the periodic setting (see Section~\ref{sse:hone}). Eventually, $\mathbb{E} \left[ \grad u_\eps(\cdot,\omega) \right]$ is approximated by $\overline{C}_\eps \, \grad \overline{u}_\eps$.

\begin{remark} \label{re:expec}
Criteria~\eqref{eq:Phi.sto} and~\eqref{eq:hone_sto1} are arbitrary and selected upon practical considerations. Among the possible alternatives, we could have considered
$$
\Phi^{\rm sto}_\eps(\overline{A},f) = \mathbb{E} \left[ \norm[\Ltwo]{(-\Delta)^{-1}\left[\div\left(\overline{A}\grad u_\eps(f) \right)+f\right]}^2 \right]
$$
instead of~\eqref{eq:Phi.sto}, and a similar alternative for the reconstruction~\eqref{eq:hone_sto1}. 

We have not proceeded in any of these directions. Note also that, in~\cite{LBLeL:13}, we defined the minimization problems $\omega$ by $\omega$ and next took the expectation of the results. Of course, considering expectations in the cost functions results in significant computational savings, besides actually improving accuracy and robustness.
\end{remark}

\section{Implementation details to solve~\eqref{eq:infsup.sto}} \label{se:impl}

We detail here how problem~\eqref{eq:infsup.sto}, in the stationary ergodic setting, can be efficiently solved in practice. Problem~\eqref{eq:infsup}, in the periodic setting, is actually simpler to solve, and we skip the easy adaptation to that case. 

The minimizer of~\eqref{eq:infsup.sto} is denoted by $\overline{A}_{\eps,h}^{P,M}$, where $h\ll\eps$ denotes the size of a mesh ${\cal T}_h = \{T\}$ of the domain ${\cal D}$, $P$ denotes the dimension of the subspace $V_{\rm n}^P({\cal D})$ of $\Ltwon$ used to approximate the $\sup$ problem (see~\eqref{eq:VPn}), and $M\in\N^\star$ denotes the number of Monte Carlo realizations used to approximate $\mathbb{E}(u_\eps)$ in~\eqref{eq:Phi.sto}.

\medskip

\noindent The algorithm consists of three steps:
\begin{enumerate}
\item Compute an approximation of $\Big\{ \mathbb{E}[u_\eps(f_p)] \Big\}_{1\leq p\leq P}$ (see Section~\ref{ssse:expec}). This is the most expensive step, as $M \times P$ oscillatory problems of the type~\eqref{eq:osc} are to be solved.
\item Compute an approximation of $(-\Delta)^{-1} f_p$ and of $\dis \left\{ (-\Delta)^{-1} \left( \partial_{ij} \mathbb{E}[u_\eps(f_p)]\right) \right\}_{1\leq i,j\leq d}$, for any $1\leq p\leq P$ (see Section~\ref{ssse:precomp}). This amounts to solving $P \left( 1+ d(d+1)/2 \right)$ problems with constant coefficients.
\item Solve problem~\eqref{eq:infsup.sto} iteratively (see Section~\ref{ssse:sol}). Each iteration involves diagonalizing a $P \times P$ matrix and solving a linear system with $d(d+1)/2$ unknowns. The cost of this third step is negligible.
\end{enumerate}
We now successively detail these three steps.

\subsection{Approximation of $\Big\{ \mathbb{E}[u_\eps(f_p)] \Big\}_{1\leq p\leq P}$} \label{ssse:expec}

For any basis function $f_p$ of $V^P_{\rm n}({\cal D})$, $1\leq p\leq P$, we approximate $\mathbb{E}[u_\eps(f_p)]$ by the empirical mean
\begin{equation} \label{eq:expec}
u_{\eps,h}^M(f_p) = \frac{1}{M}\sum_{m=1}^M u_{\eps,h}(f_p;\omega_m),
\end{equation}
where, for $1 \leq m \leq M$, $u_{\eps,h}(f_p;\omega_m)$ is the $\mathbb{P}^1$ approximation on ${\cal T}_h$ of $u_\eps(f_p;\omega_m)$, unique solution to~\eqref{eq:osc} with the oscillatory matrix-valued coefficient $A_\eps(\cdot,\omega_m)$ and the right-hand side~$f_p$.

To compute~\eqref{eq:expec} for all $1\leq p\leq P$, one has to (i) assemble $M$ random stiffness matrices, (ii) assemble $P$ deterministic right-hand sides, and (iii) solve $M\times P$ linear systems. This step is the only one involving Monte Carlo computations, and is therefore the most expensive part of the whole procedure.

\subsection{Precomputation of tensorial quantities} \label{ssse:precomp}

Once the computations of Section~\ref{ssse:expec} have been performed, we assemble some tensors that are needed to efficiently solve the $\sup$ and $\inf$ problems involved in~\eqref{eq:infsup.sto}.

We first compute, for any $1\leq p\leq P$, the approximations $z_h(f_p)$ and $\left\{ z^{M,ij}_{\eps,h}(f_p) \right\}_{1\leq i,j\leq d}$ on ${\cal T}_h$ of $(-\Delta)^{-1} f_p$ and $\dis \left\{ (-\Delta)^{-1} \left( \partial_{ij} \mathbb{E}[u_\eps(f_p)]\right) \right\}_{1\leq i,j\leq d}$. In particular, $z^{M,ij}_{\eps,h}(f_p)$ is such that, for any $\mathbb{P}^1$ function $w_h$ on ${\cal T}_h$ that vanishes on $\partial {\cal D}$, 
\begin{equation*} %\label{eq:inv.app}
\int_{\cal D} \grad z^{M,ij}_{\eps,h}(f_p) \cdot \grad w_h = -\int_{\cal D} \partial_j \left[u_{\eps,h}^M(f_p)\right]\;\partial_i w_h.
\end{equation*}
Note that the following symmetry identity holds: $z^{M,ij}_{\eps,h}(f_p)=z^{M,ji}_{\eps,h}(f_p)$. 

We next assemble, for all integers $1\leq i,j,k,l\leq d$ and $1\leq p,q\leq P$, the quantities
\begin{eqnarray} 
\label{eq:Btens.app}
\left[{\cal K}_{\eps,h}^M\right]_{i,j,k,l,p,q} & = & 2 \int_{\cal D} z_{\eps,h}^{M,ij}(f_p) \, z_{\eps,h}^{M,kl}(f_q),
\\
\label{eq:Bmat.app}
\left[\mathbb{K}_{\eps,h}^M\right]_{i,j,p,q} & = & -\int_{\cal D} z_{\eps,h}^{M,ij}(f_p) \, z_h(f_q), 
\\
\label{eq:bscal.app}
\left[K_h\right]_{p,q}& = & \int_{\cal D} z_h(f_p) \, z_h(f_q).
\end{eqnarray}
We emphasize that the cost of this step depends on $P$ but is independent of the number $M$ of Monte Carlo realizations, and thus small in comparison to the cost of the operations described in Section~\ref{ssse:expec} for typical values of $M$ and $P$ (in the numerical results reported on in Section~\ref{se:num}, we have worked with $M=100$ and $P \leq 9$).

\subsection{Solution of the fully discrete problem} \label{ssse:sol}

\subsubsection{Formulation}

At this stage, the original problem~\eqref{eq:infsup.sto} has been approximated by its fully discrete version
\begin{equation} \label{eq:infsup.app}
I^{P,M}_{\eps,h} = \inf_{\overline{A}\in{\cal S}}\,\sup_{\vec{c} \in \R^P, \, |\vec{c}|^2=1}\Phi^{P,M}_{\eps,h}(\overline{A},\vec{c}),
\end{equation}
where, for any $\overline{A} \in \R^{d \times d}_{\rm sym}$ and $\vec{c}= \{ c_p \}_{1\leq p\leq P} \in \R^P$,
\begin{equation} 
\label{eq:Phi.app}
\Phi^{P,M}_{\eps,h}(\overline{A},\vec{c}) = \norm[\Ltwo]{\sum_{p=1}^Pc_p\left(\sum_{1\leq i,j\leq d}\overline{A}_{i,j}\,z^{M,ij}_{\eps,h}(f_p)+z_h(f_p)\right)}^2.
\end{equation}
Problem~\eqref{eq:infsup.app} is solved by iteratively considering the problem
\begin{equation} 
\label{eq:tutu1}
\sup_{\vec{c} \in \R^P, \, |\vec{c}|^2=1} \Phi^{P,M}_{\eps,h}(\overline{A},\vec{c})
\end{equation}
with $\overline{A}\in{\cal S}$ fixed, and the problem
\begin{equation} 
\label{eq:tutu2}
\inf_{\overline{A}\in{\cal S}} \Phi^{P,M}_{\eps,h}(\overline{A},\vec{c})
\end{equation}
with $\vec{c} \in \R^P$ fixed. We successively explain how we solve the $\sup$ problem~\eqref{eq:tutu1} (for $\overline{A}\in{\cal S}$ fixed), the $\inf$ problem~\eqref{eq:tutu2} (for $\vec{c}\in\R^P$ fixed), and next describe the iterative algorithm that we have implemented to solve~\eqref{eq:infsup.app}.

\subsubsection{The $\sup$ problem~\eqref{eq:tutu1}}

Let $\overline{A}\in{\cal S}$ be fixed. One can easily observe that
\begin{equation*} %\label{eq:Phi.G}
\Phi^{P,M}_{\eps,h}(\overline{A},\vec{c})=\vec{c}^T\,G_{\eps,h}^M(\overline{A})\,\vec{c},
\end{equation*}
where $G_{\eps,h}^M(\overline{A})$ is a symmetric, positive semi-definite, $P\times P$ matrix which can be assembled at no additional cost using the precomputed quantities defined in~\eqref{eq:Btens.app}--\eqref{eq:Bmat.app}--\eqref{eq:bscal.app} (see Appendix~\ref{sec:appB} for its exact expression). Solving the $\sup$ problem~\eqref{eq:tutu1} (with fixed matrix $\overline{A}$) hence amounts to finding a normalized eigenvector in $\R^P$ associated with the largest eigenvalue of the matrix $G_{\eps,h}^M(\overline{A})$. This is reminiscent of the eigenvalue problem discussed in Section~\ref{ssse:sup}. Practically, this eigenvector is computed using the power method. The cost of such a computation is negligible, owing to the small size of the matrix $G_{\eps,h}^M(\overline{A})$ (recall that $P$ is typically small in comparison to $M$). We denote by $\vec{c}(\overline{A})$ its solution and hence have
\begin{equation} \label{eq:Phi.G.bis}
\sup_{\vec{c}\in\R^P,\,|\vec{c}|^2=1}\Phi^{P,M}_{\eps,h}(\overline{A},\vec{c})=\vec{c}(\overline{A})^T\,G_{\eps,h}^M(\overline{A})\,\vec{c}(\overline{A}).
\end{equation} 

\subsubsection{The $\inf$ problem~\eqref{eq:tutu2}} \label{ssse:inf.app}

Let $\vec{c}\in\R^P$, $|\vec{c}|^2=1$, be fixed. We observe that
$$
\Phi^{P,M}_{\eps,h}(\overline{A},\vec{c}) = \frac{1}{2} \sum_{1\leq i,j,k,l\leq d} \left[\mathbb{B}_{\eps,h}^{P,M}(\vec{c})\right]_{i,j,k,l} \ \overline{A}_{i,j} \ \overline{A}_{k,l}-\sum_{1\leq i,j\leq d} \left[B_{\eps,h}^{P,M}(\vec{c})\right]_{i,j} \ \overline{A}_{i,j} + b_h^P(\vec{c}),
$$
where $\mathbb{B}_{\eps,h}^{P,M}(\vec{c})$ is a $d\times d\times d\times d$ fourth-order tensor, $B_{\eps,h}^{P,M}(\vec{c})$ is a $d\times d$ matrix and $b_h^P(\vec{c})$ is a scalar that can all be assembled at no additional cost using the precomputed quantities defined in~\eqref{eq:Btens.app}--\eqref{eq:Bmat.app}--\eqref{eq:bscal.app} (see Appendix~\ref{sec:appB} for their exact expressions). We recognize in $\Phi^{P,M}_{\eps,h}$ the discrete equivalent of~\eqref{eq:Phi.tens}. The $\inf$ problem~\eqref{eq:tutu2} (with fixed eigenvector $\vec{c}$) is in practice solved as explained in Section~\ref{ssse:inf}, by considering the linear system (see~\eqref{eq:lin.sys-pre})
\begin{equation} 
\label{eq:lin.sys}
\forall\,1\leq i, j\leq d, \quad \sum_{1\leq k,l\leq d} \left[ \mathbb{B}_{\eps,h}^{P,M}(\vec{c})\right]_{i,j,k,l} \ \overline{A}_{k,l} = \left[B_{\eps,h}^{P,M}(\vec{c}) \right]_{i,j}.
\end{equation}

\subsubsection{Iterative algorithm}

In the above description, we have considered either the $\sup$ problem (on $\vec{c}$, with fixed $\overline{A}$) or the $\inf$ problem (on $\overline{A}$, for fixed $\vec{c}$) involved in~\eqref{eq:infsup.app}. We now assemble these two building blocks to build an algorithm to solve~\eqref{eq:infsup.app}. Introducing
\begin{equation} \label{eq:Phi.sup.app}
\Phi_{\eps,h}^{P,M}(\overline{A}) = \sup_{\vec{c}\in\R^P,\,|\vec{c}|^2=1}\Phi_{\eps,h}^{P,M}(\overline{A},\vec{c}),
\end{equation}
we recast~\eqref{eq:infsup.app} as
\begin{equation} \label{eq:infsup.app.bis}
I^{P,M}_{\eps,h} = \inf_{\overline{A}\in{\cal S}}\Phi^{P,M}_{\eps,h}(\overline{A}).
\end{equation}
We have seen (see~\eqref{eq:Phi.G.bis}) that $\Phi_{\eps,h}^{P,M}(\overline{A})=\vec{c}(\overline{A})^T\,G_{\eps,h}^M(\overline{A})\,\vec{c}(\overline{A})$, where $\vec{c}(\overline{A})$ is an eigenvector of the matrix $G_{\eps,h}^M(\overline{A})$. One can easily prove that, for any $1\leq i,j\leq d$,
$$
\left[\grad_{\overline{A}}\Phi_{\eps,h}^{P,M}(\overline{A})\right]_{i,j} = \vec{c}(\overline{A})^T \ \partial_{\overline{A}_{i,j}} G_{\eps,h}^M(\overline{A}) \ \vec{c}(\overline{A}),
$$
which reads, using the expressions~\eqref{eq:Gmat},~\eqref{eq:Ktens} and~\eqref{eq:Kmat} of $G_{\eps,h}^M$, $\mathbb{B}_{\eps,h}^{P,M}$ and $B_{\eps,h}^{P,M}$ given in Appendix~\ref{sec:appB}, as
\begin{equation} \label{eq:grad}
\left[\grad_{\overline{A}}\Phi_{\eps,h}^{P,M}(\overline{A})\right]_{i,j} = \sum_{1\leq k,l\leq d} \left[\mathbb{B}_{\eps,h}^{P,M}(\vec{c}(\overline{A}))\right]_{i,j,k,l} \ \overline{A}_{k,l} - \left[B_{\eps,h}^{P,M}(\vec{c}(\overline{A}))\right]_{i,j}.
\end{equation}
Let $0<\mu<1$. In practice, we iterate as follows to solve problem~\eqref{eq:infsup.app.bis}. Let $n\in\N$ and $\overline{A}^n\in{\cal S}$.
\begin{enumerate}
\item We compute $\vec{c}^n = \vec{c}(\overline{A}^n)$ solution to the $\sup$ problem~\eqref{eq:Phi.sup.app} with fixed matrix $\overline{A}^n$.
\item We compute $\overline{A}^{n+1}_\flat \in \R^{d \times d}_{\rm sym}$ solution to the linear system~\eqref{eq:lin.sys} with fixed eigenvector $\vec{c}^n$. As pointed out above, we assume that $\overline{A}_\flat^{n+1}$ belongs to the convex subset ${\cal S}$ of $\R^{d \times d}_{\rm sym}$. It has always been the case in our numerical experiments.
\item We define the next iterate as
\begin{equation} \label{eq:iter}
\overline{A}^{n+1} = (1-\mu)\,\overline{A}^n+\mu\,\overline{A}^{n+1}_\flat.
\end{equation}
For the numerical results reported on in Section~\ref{se:num}, we have worked with $\mu \leq 0.1$.
\end{enumerate}
Since $\overline{A}^{n+1}$ is a convex combination of $\overline{A}^n \in {\cal S}$ and $\overline{A}^{n+1}_\flat \in {\cal S}$, we have $\overline{A}^{n+1} \in {\cal S}$. The iterations are initialized using, say,
$$
\overline{A}^0 = \mathbb{E}\left(\frac{1}{|{\cal D}|}\int_{\cal D} A_\eps(\vec{x},\cdot)\,{\rm d}\vec{x}\right).
$$

\medskip

Let us briefly explain, at least formally, why the algorithm defined above enables to find a minimizer of~\eqref{eq:infsup.app.bis}. We assume the linear system~\eqref{eq:lin.sys} to be invertible, and we denote by $\left[\mathbb{B}^{P,M}_{\eps,h}(\vec{c})\right]^{-1}$ its formal inverse. Since $\overline{A}^{n+1}_\flat$ is defined as the solution to~\eqref{eq:lin.sys} with eigenvector $\vec{c}^n$, we infer from~\eqref{eq:lin.sys} and~\eqref{eq:grad} that
$$
\mathbb{B}^{P,M}_{\eps,h}(\vec{c}^n)\overline{A}^{n+1}_\flat=B_{\eps,h}^{P,M}(\vec{c}^n)=\mathbb{B}^{P,M}_{\eps,h}(\vec{c}^n)\overline{A}^n-\grad_{\overline{A}}\Phi_{\eps,h}^{P,M}(\overline{A}^n),
$$
and thus
$$
\overline{A}^{n+1}_\flat=\overline{A}^n - \left[\mathbb{B}^{P,M}_{\eps,h}(\vec{c}^n)\right]^{-1}\grad_{\overline{A}}\Phi_{\eps,h}^{P,M}(\overline{A}^n).
$$
The iteration~\eqref{eq:iter} can be recast under the form
$$
\overline{A}^{n+1}=\overline{A}^n-\mu \, \left[\mathbb{B}^{P,M}_{\eps,h}(\vec{c}^n)\right]^{-1} \grad_{\overline{A}}\Phi_{\eps,h}^{P,M}(\overline{A}^n).
$$
This is a quasi-Newton algorithm for the minimization of the function $\overline{A}\mapsto\Phi_{\eps,h}^{P,M}(\overline{A})$, with a fixed step size $\mu$ and where the Hessian of $\Phi_{\eps,h}^{P,M}$ with respect to $\overline{A}$ is approximated by $\mathbb{B}^{P,M}_{\eps,h}$.

Note that each iteration of the algorithm is inexpensive in comparison with the cost of the operations described in Sections~\ref{ssse:expec} and~\ref{ssse:precomp}. Consequently, there is no real advantage in improving the optimization algorithm~\eqref{eq:iter} (e.g.~by optimizing the value of $\mu$ by a line search).

\section{Numerical results} \label{se:num}

As pointed out in Section~\ref{se:intro}, our approach targets practical situations where the information on the oscillatory coefficients in the equation may be incomplete, and thus the other available approaches cannot be applied. It is nevertheless a legitimate question to investigate how our approach performs on standard test-cases in the periodic and stationary ergodic settings, and how it compares with the classical homogenization approach for small values of $\eps$. As already pointed out in Section~\ref{sec:outline}, and as detailed below (see Section~\ref{ssse:eps.inf.eval}), the aim of the numerical tests is different in the periodic setting and in the stochastic setting. It is also different if $\eps$ is asymptotically small or if $\eps$ takes larger values.

This section is organized as follows. In Section~\ref{sse:cases}, we introduce the periodic and the stationary ergodic test cases considered. In Section~\ref{sse:eps.inf}, we present the numerical results obtained in the case of small values of $\eps$. In Section~\ref{sse:eps.sup}, we address the case of larger values of $\eps$.

\subsection{Test-cases} \label{sse:cases}

We let $d=2$ and the domain ${\cal D}$ be the unit square $(0,1)^2$. We fix the value of the parameter $\overline{\eps}$ to ${\rm size}({\cal D})/10 = 10^{-1}$.

\subsubsection{Periodic setting}

We consider the test-case introduced in~\cite{LBLeL:13}, namely
\begin{equation} \label{eq:per.osc.1}
A_\eps(x,y)=A^{\rm per}(x/\eps,y/\eps),
\end{equation}
with $A^{\rm per}$ a $\mathbb{Z}^2$-periodic symmetric matrix field given by
\begin{equation} \label{eq:per.osc.2}
\begin{alignedat}{1}
\left[A^{\rm per}(x,y)\right]_{1,1}&=2+\frac{1}{2\pi}(\sin(2\pi x)+\sin(2\pi y)),
\\
\left[A^{\rm per}(x,y)\right]_{1,2}&=\frac{1}{2\pi}(\sin(2\pi x)+\sin(2\pi y)), 
\\
\left[A^{\rm per}(x,y)\right]_{2,2}&=1+\frac{1}{2\pi}(\sin(2\pi x)+\sin(2\pi y)).
\end{alignedat}
\end{equation}
The coefficients of the corresponding homogenized matrix (obtained by solving the periodic corrector problem~\eqref{eq:corr.per} on a very fine mesh) are
\begin{equation} \label{eq:per.star}
[A_\star]_{1,1}\approx 1.9806,\qquad [A_\star]_{1,2} = [A_\star]_{2,1} \approx -0.019345,\qquad [A_\star]_{2,2} \approx 0.98065.
\end{equation}

\subsubsection{Stationary ergodic setting}

We consider the random checkerboard test-case (studied e.g. in~\cite{LBLeL:13}), namely
\begin{equation} \label{eq:sta.osc.1}
A_\eps(x,y,\omega)=a^{\rm sto}(x/\eps,y/\eps,\omega)\,{\rm Id}_2,
\end{equation}
with $a^{\rm sto}$ a discrete stationary field given by (recall that $Q=(0,1)^2$)
\begin{equation} \label{eq:sta.osc.2}
a^{\rm sto}(x,y,\omega)=\sum_{\vec{k}\in\mathbb{Z}^2}\mathbb{1}_{Q+\vec{k}}(x,y)X_{\vec{k}}(\omega),
\end{equation}
where the random variables $X_{\vec{k}}$ are i.i.d.~and such that $\mathbb{P}(X_{\vec{k}}=4)=\mathbb{P}(X_{\vec{k}}=16)=1/2$. An explicit expression for the homogenized matrix is known in that case:
\begin{equation} \label{eq:sta.star}
A_\star=8\,{\rm Id}_2.
\end{equation}

\subsection{Results in the case $\eps<\overline{\eps}$} \label{sse:eps.inf}

\subsubsection{Objectives in the periodic case and in the stochastic case} 
\label{ssse:eps.inf.eval}

In the regime $\eps<\overline{\eps}$, we know from Proposition~\ref{pr:as.cons} that our method can be seen as a practical variational approach for computing the homogenized matrix $A_\star$. The remaining question is whether this approach is efficient or not, and particularly, compared with the classical approach in homogenization.

Our approach (based on~\eqref{eq:infsup}--\eqref{eq:Phi}) requires solving the {\em highly oscillatory} equations~\eqref{eq:osc.0} set on the domain ${\cal D}$, for $P=d(d+1)/2$ right-hand sides. In the periodic setting, the classical homogenization approach requires solving $d$ {\em non-oscillatory} equations set on the unit cell $Q$. There is thus no hope to outperform the latter approach in terms of computational time. This setting is nonetheless considered as a validation and we investigate how our approach performs in terms of accuracy, for the approximation of the homogenized matrix, and for the approximation of $u_\eps$ in the $L^2$ and $H^1$ norms.

The real, discriminating, test-case for our approach is the stationary ergodic setting. Indeed, classical homogenization then requires solving equations that are set on a truncated approximation $Q^N=(-N,N)^d$ of an asymptotically infinitely large domain (see~\eqref{eq:corr.N} in Section~\ref{sse:comp.hom}). The coefficients of these equations vary at scale $1$. In that case, to hope for an accurate approximation of the homogenized matrix, one has to consider a meshsize $H\ll 1$. On the other hand, we consider a meshsize $h \ll \eps$ to solve the highly oscillatory equations (set on the domain ${\cal D}$) involved in our approach. We see that, up to an appropriate choice of the parameter $H$ such that
\begin{equation} \label{eq:H}
\frac{2N}{H} = \frac{\text{size}({\cal D})}{h},
\end{equation}
where $\text{size}({\cal D})$ is typically the diameter of ${\cal D}$, the classical homogenization approach and ours involve solving linear systems of the same size. The computational workload for the two approaches is thus of the same order of magnitude, although not identical. We have decided to enforce~\eqref{eq:H} and to relate $N$ in~\eqref{eq:corr.N} and $\eps$ in~\eqref{eq:osc} by
\begin{equation} \label{eq:N}
N={\rm size}({\cal D})/2\eps.
\end{equation}
Note that imposing~\eqref{eq:N} is equivalent to enforcing $\eps/h=1/H$. We then compare the two methods in terms of solution time and accuracy. Obviously, for the two methods, the same number $M$ of Monte Carlo realizations is used, and the same $M$ realizations are considered.

\begin{remark}
Another possibility would have been to impose $\eps/h=1/H$ and to adjust the size $N$ of $Q^N$ in~\eqref{eq:corr.N} so that both approaches exactly share the same workload. We did not pursue in that direction.
\end{remark}

The numerical experiments reported in Section~\ref{ssse:eps.inf.sta} show that, in the stochastic case, and for all the values of $\eps<\overline{\eps}$ that have been considered, the approximation of $A_\star$ obtained by the classical homogenization approach is slightly more accurate than that obtained with our approach. In contrast, our approach provides a better $L^2$-approximation and a better $H^1$-approximation of $\mathbb{E}(u_\eps)$. This is somewhat intuitive, as our approach is targeted toward the approximation of $u_\eps$ rather than $A_\star$. In terms of computational cost, our approach is slightly less expensive for moderately small values of $\eps$, and slightly more expensive for asymptotically small values of $\eps$ (in any cases, the ratio of costs remains close to 1, see Figure~\ref{fi:2} below).

\subsubsection{Choice of the numerical parameters}

We recall that the integer $M$ denotes the number of i.i.d. realizations used to approximate the expectation in the cost function~\eqref{eq:Phi.sto} (see~\eqref{eq:expec}). We also recall that the integer $P$ denotes the dimension of the set $V^P_{\rm n}({\cal D})$ (defined in~\eqref{eq:VPn}) that is used to approximate the space $L^2_{\rm n}({\cal D})$ in the $\sup$ problem. As explained in Section~\ref{ssse:sup}, we consider as basis functions of the set $V^P_{\rm n}({\cal D})$ the first $P$ normalized eigenvectors of the laplacian operator in the domain ${\cal D}$. Because of the simple geometry of ${\cal D}$, they are here analytically known. We take here $P=d \, (d+1)/2$, that is $P=3$, which is the minimum dimension of the search space $V^P_{\rm n}({\cal D})$. 

\subsubsection{Results in the periodic setting} \label{ssse:eps.inf.per}

We consider the parameters $\{ \eps_k \}_{0\leq k\leq 6}$ such that $\eps_0=0.4$ and $\eps_k=\eps_{k-1}/2$ for $1\leq k\leq 6$. The associated meshsizes are $\{ h_k \}_{0\leq k\leq 6}$ such that $h_k=\eps_k/r$ for $r\approx 43$, unless otherwise mentioned. We focus on the values $\{ \eps_k \}_{3\leq k\leq 6}$, for which we have $\eps_k<\overline{\eps}$.

\medskip

The error in the approximation of the homogenized matrix is defined by
\begin{equation} \label{eq:per.mat}
\verb?err_per_mat? = \left(\frac{\sum_{1\leq i,j\leq d}\left| \left[\overline{A}_{\eps,h}^P\right]_{i,j}- [A_\star]_{i,j} \right|^2}{\sum_{1\leq i,j\leq d} \left| [A_\star]_{i,j} \right|^2 } \right)^{1/2},
\end{equation} 
where $A_\star$ is taken equal to its reference value~\eqref{eq:per.star} and $\overline{A}^P_{\eps,h}$ is the best matrix computed by our approach. The numerical results are collected in Table~\ref{ta:1}. We observe that our approach provides an accurate approximation of the homogenized matrix. The accuracy of the approximation improves (in the limit of spatial resolution) as $\eps$ decreases.

\medskip

\begin{verbbox}err_per_mat\end{verbbox}
\begin{table}[h!]
\begin{center}
\begin{tabular}{|c||c|c|c|c|}
\hline
$\eps$ & $0.05$ & $0.025$ & $0.0125$ & $0.00625$ \\
\hline
\verb?err_per_mat? ($\eps/h\approx 43$) & $1.0145 \ 10^{-3}$ & $7.6477 \ 10^{-4}$ & $6.6613 \ 10^{-4}$ & $6.2881 \ 10^{-4}$ \\
\hline
\verb?err_per_mat? ($\eps/h\approx 86$) & $6.5399 \ 10^{-4}$ & $3.5074 \ 10^{-4}$ & $2.3749 \ 10^{-4}$ & {\rm X} \\
\hline
\end{tabular}
\end{center}
\caption{Approximation of $A_\star$ (\theverbbox) in function of $\eps$ (each line corresponds to a different value of the ratio $\eps/h$). The test cases with $\eps$ too small and $\eps/h$ too large are prohibitively expensive to perform. They are marked with an X.} \label{ta:1}
\end{table}

\bigskip

\noindent We now examine the approximation of $u_\eps$ in the $L^2$ norm. We denote by
\begin{itemize}
\item[$\bullet$] $u_{\eps,h}(f)$ the discrete solution to~\eqref{eq:osc.0} with the periodic oscillatory coefficient given by~\eqref{eq:per.osc.1}--\eqref{eq:per.osc.2} and the right-hand side $f$;
\item[$\bullet$] $u_{\star,h}(f)$ the discrete solution to~\eqref{eq:hom.0} with the homogenized matrix~\eqref{eq:per.star} and the right-hand side $f$;
\item[$\bullet$] $u_{\eps,h}^{1,\vec{\theta}}(f)$ the two-scale expansion (truncated at first-order) built from $u_{\star,h}(f)$ (see~\eqref{eq:tse}), where we use the periodic correctors solution to~\eqref{eq:corr.per};
\item[$\bullet$] $\overline{u}^P_{\eps,h}(f)$ the discrete solution to~\eqref{eq:bar.0} with the matrix $\overline{A}_{\eps,h}^P$ and the right-hand side $f$ (we recall that the matrix $\overline{A}_{\eps,h}^P$ has been computed using a small number $P$ of right-hand sides).
\end{itemize}
To assess the quality of the approximation of $u_{\eps,h}$ by $\widehat{u}^{\vec{\theta}}_h \in \left\{ u_{\star,h}, \ u_{\eps,h}^{1,\vec{\theta}}, \ \overline{u}^P_{\eps,h} \right\}$ in the $L^2$ norm, we define the criterion
\begin{equation} \label{eq:per.l2} 
\verb?err_per_L2? = \left( \frac{\dis \inf_{\vec{\theta}\in\mathbb{R}^2} \left[ \sup_{f\in V^{\cal Q}_{\rm n}({\cal D})} \norm[\Ltwo]{u_{\eps,h}(f) - \widehat{u}^{\vec{\theta}}_h(f)}^2 \right]}{\norm[\Ltwo]{u_{\eps,h} \left( \widehat{f}_\eps \right)}^2} \right)^{1/2}.
\end{equation}
Note that the supremum is taken over $f\in V^{\cal Q}_{\rm n}({\cal D})$, where ${\cal Q} \gg P$. We take ${\cal Q}=16$, and we have checked, in all the cases considered below, that our results do not significantly change for a larger value of ${\cal Q}$. The function $\widehat{f}_\eps\in V^{\cal Q}_{\rm n}({\cal D})$ denotes the argument of the $\inf \sup$ problem in the numerator of~\eqref{eq:per.l2}. We hence compare $u_\eps$ with its homogenized limit $u_\star$, its first-order two-scale expansion $u_\eps^{1,\vec{\theta}}$ (recall in this case that the correctors are defined up to an additive constant $\vec{\theta}$, over which we minimize the error in~\eqref{eq:per.l2}), and the approximation $\overline{u}^P_\eps$ provided by our approach. The numerical results are collected in Figure~\ref{fi:1}.

We observe that the solution associated with the best matrix we compute indeed converges towards the exact solution, in the $L^2$ norm. We however recall that, in the present periodic setting, computing $\overline{u}_{\eps,h}^P$ is much more expensive than computing $u_{\star,h}$ or $u_{\eps,h}^{1,\vec{\theta}}$. 

\medskip

\begin{verbbox}err_per_L2\end{verbbox}
\begin{figure}[h!]
\centering
\begin{tikzpicture}[scale=1.]
\begin{loglogaxis} [
xtick={0.05,0.025,0.0125,0.00625},
xticklabels={$0.05$,$0.025$,$0.0125$,$0.00625$}
]
\addplot[thick,mark=x,red] table[x=epsilon,y=err_l2_star]{dat/eps_inf/det/err_l2_43.dat};

\addplot[thick,mark=x,brown] table[x=epsilon,y=err_l2_star_corr]{dat/eps_inf/det/err_l2_43.dat};

\addplot[thick,mark=x,black] table[x=epsilon,y=err_l2_bar]{dat/eps_inf/det/err_l2_43.dat};
\end{loglogaxis}
\end{tikzpicture}
\caption{Approximation of $u_\eps$ in the $L^2$ norm (\theverbbox) by $u_{\star,h}$ (red), $u_{\eps,h}^{1,\vec{\theta}}$ (brown) and $\overline{u}^P_{\eps,h}$ (black) in function of $\eps$, for $h$ such that $\eps/h \approx 43$.} \label{fi:1}
\end{figure}

\bigskip

We next examine the $H^1$ error. For $f\in\Ltwo$, we denote by $C_{\eps,h}\grad u_{\star,h}(f)$ the discrete equivalent of $C_\eps\grad u_\star(f)$, the homogenization-based approximation of $\grad u_\eps(f)$, see~\eqref{eq:grad.ueps}--\eqref{eq:Ceps} in Section~\ref{sse:hone}. We recall that, in our approach, we seek an approximation of $\grad u_\eps(f)$ under the form $\overline{C}_{\eps}\grad\overline{u}_\eps(f)$ (see~\eqref{eq:hone}), the discrete equivalent of which is computed as $\overline{C}^R_{\eps,h}\grad\overline{u}^P_{\eps,h}(f)$. Recall that the integer $R$ is the number of right-hand sides used to define the least-squares minimization problem~\eqref{eq:min.hone} giving $\overline{C}^R_{\eps,h}$. Here, we take $R=P=3$. To assess the quality of the approximation of $\grad u_{\eps,h}$, we define, for $\widehat{C}_{\eps,h}\grad\widehat{u}_h \in \left\{ C_{\eps,h}\grad u_{\star,h}, \ \overline{C}^R_{\eps,h}\grad\overline{u}^P_{\eps,h}\right\}$, the criterion
\begin{equation} \label{eq:per.h1}
\verb?err_per_H1? = \left( \frac{ \dis \sup_{f\in V^{\cal Q}_{\rm n}({\cal D})} \norm[L^2({\cal D} \setminus {\cal B})]{\grad u_{\eps,h}(f) - \widehat{C}_{\eps,h} \grad \widehat{u}_h(f)}^2}{\norm[L^2({\cal D} \setminus {\cal B})]{\grad u_{\eps,h} \left( \widehat{f}_\eps \right)}^2} \right)^{1/2},
\end{equation}
where, here again, the supremum is taken over a space $V^{\cal Q}_{\rm n}({\cal D})$ much larger than $V^P_{\rm n}({\cal D})$ (we take ${\cal Q}=16$), and where $\widehat{f}_\eps\in V^{\cal Q}_{\rm n}({\cal D})$ denotes the argument of the $\sup$ problem. In~\eqref{eq:per.h1}, ${\cal B}$ represents the subset of ${\cal D}$ formed by the boundary elements of the discretization ${\cal T}_h$. We remove them in view of the discussion below~\eqref{eq:min.hone}. 
We thus compare $\grad u_\eps$ with its approximation $C_\eps \grad u_\star$ provided by the two-scale expansion and with the approximation $\overline{C}^R_\eps \ \grad\overline{u}^P_\eps$ provided by our approach. The numerical results are collected in Table~\ref{ta:2}.

\medskip

We observe that our approach provides an accurate $H^1$-approximation of $u_\eps$. As $\eps$ goes to zero, the surrogate we compute is (roughly) a first-order convergent approximation of $\grad u_\eps$ in the $L^2$ norm. As far as the homogenization-based approximation is concerned, we expect it to converge with order at least one half (see~\eqref{eq:tse.est}). This is what we observe in practice, as long as $\eps$ is not too small. Otherwise, the error due to the meshsize dominates, and the error~\eqref{eq:per.h1} does not decrease anymore when $\eps$ decreases. 

\begin{verbbox}err_per_H1\end{verbbox}
\begin{table}[h!]
\begin{center}
\begin{tabular}{|c||c|c|c|c|}
\hline
$\eps$ & $0.05$ & $0.025$ & $0.0125$ & $0.00625$ \\
\hline
\verb?err_per_H1? for $C_{\eps,h}\grad u_{\star,h}$ & $2.0906 \ 10^{-2}$ & $1.6461 \ 10^{-2}$ & $1.2513 \ 10^{-2}$ & {\rm X} \\
\hline
\verb?err_per_H1? for $\overline{C}^R_{\eps,h}\grad\overline{u}^P_{\eps,h}$ & $1.5550 \ 10^{-2}$ & $7.6055 \ 10^{-3}$ & $3.7549 \ 10^{-3}$ & {\rm X} \\
\hline
\end{tabular}
\end{center}
\caption{Approximation of $\grad u_\eps$ in the $L^2$ norm (\theverbbox) by $C_{\eps,h}\grad u_{\star,h}$ and $\overline{C}^R_{\eps,h}\grad\overline{u}^P_{\eps,h}$ in function of $\eps$, for $h$ such that $\eps/h \approx 86$. The test cases with $\eps$ too small are prohibitively expensive to perform. They are marked with an X.} \label{ta:2}
\end{table}

\subsubsection{Results in the stationary ergodic setting} \label{ssse:eps.inf.sta}

We consider the parameters $\{ \eps_k \}_{0\leq k\leq 5}$ such that $\eps_k=2^{-(k+1)}$ for $0\leq k\leq 5$. In agreement with formula~\eqref{eq:N}, we couple these parameters to the parameters $\{ N_k \}_{0\leq k\leq 5}$ (defining the domain on which we solve the corrector problems~\eqref{eq:corr.N}) such that $N_k=2^k$. The associated meshsizes $\{ h_k \}_{0\leq k\leq 5}$ and $\{ H_k \}_{0\leq k\leq 5}$ are computed respectively letting $h_k=\eps_k/r$ for $r\approx 27$ (unless otherwise stated) and using~\eqref{eq:H}. We focus on the values $\{ \eps_k \}_{3\leq k\leq 5}$ and $\{ N_k \}_{3\leq k\leq 5}$, for which we have $\eps_k<\overline{\eps}$. We consider $M=100$ Monte Carlo realizations.

\medskip

Before discussing the accuracy of our approach, we first compare its cost with that of the classical approach. We show on Figure~\ref{fi:2} the ratio of the time needed to compute $\overline{A}^{P,M}_{\eps,h}$ using our approach divided by the time needed to compute $A_{\star,H}^{N,M}$ by the classical homogenization approach. To compare the computational times, we make use of an implementation that does not exploit parallelism, and we solve the linear systems by means of an iterative solver. In view of Figure~\ref{fi:2}, for the choice of parameters discussed in Section~\ref{ssse:eps.inf.eval}, our method is slightly faster than the standard homogenization approach for values of $N$ up to approximately $14$. This observation can be explained as follows. For the number $M=100$ of Monte Carlo realizations that we consider, we can neglect, in our procedure, the cost of the precomputation and final optimization stages, in comparison to the Monte Carlo step (see Section~\ref{se:impl}). Hence, to compute $\overline{A}_{\eps,h}^{P,M}$, we have to (i) assemble $M=100$ stiffness matrices, (ii) assemble $P=3$ right-hand sides, and (iii) solve $P\times M=300$ linear systems. In contrast, to compute $A_{\star,H}^{N,M}$, one has to solve $d\times M=200$ approximate corrector equations~\eqref{eq:corr.N}, that is to say (i) assemble $M=100$ stiffness matrices, (ii) assemble $d\times M=200$ right-hand sides, and (iii) solve $d\times M=200$ linear systems. Consequently, our approach necessitates solving $100$ more linear systems, but assembling $200$ less right-hand sides, than the classical homogenization approach. This explains what we observe. When the value of $N$ is not too large, the assembly cost is higher than the inversion cost, and our approach is faster.

\begin{figure}[h!]
\centering
\begin{tikzpicture}[scale=1.]
\begin{loglogaxis} [
xtick={8,16,32},
xticklabels={$8$,$16$,$32$},
ymin=0.75,
ymax=1.25,
ytick={0.75,1.25},
yticklabels={$0.75$,$1.25$},
extra y ticks={1.},
extra y tick style={yticklabel={$1$}, grid=major}
]
\addplot[thick,mark=x,brown] table[x=N,y=optimdivbyhomo]{dat/eps_inf/sto/time_27_100.dat};
\end{loglogaxis}
\end{tikzpicture}
\caption{Ratio of the computational times between our approach and the classical homogenization approach, in function of $N$ (here $M=100$ and $\eps/h \approx 27$).} \label{fi:2}
\end{figure}

\bigskip

We adapt to the stationary ergodic setting the accuracy criteria~\eqref{eq:per.mat},~\eqref{eq:per.l2} and~\eqref{eq:per.h1} introduced in the periodic setting. The error in the approximation of the homogenized matrix is defined, for $\widehat{A}^M \in \left\{A_{\star,H}^{N,M}, \overline{A}_{\eps,h}^{P,M}\right\}$, by
\begin{equation*} %\label{eq:sta.mat}
\verb?err_sto_mat? = \left(\frac{\sum_{1\leq i,j\leq d} \left| \left[\widehat{A}^M\right]_{i,j} - [A_\star]_{i,j} \right|^2}{\sum_{1\leq i,j\leq d} \left| [A_\star]_{i,j} \right|^2 }\right)^{1/2},
\end{equation*}
where $A_\star$ is taken equal to the exact value~\eqref{eq:sta.star}. We recall that $A_{\star,H}^{N,M}$ is the practical approximation of $A_\star^{N,M}$ defined in~\eqref{eq:Astar.N.M}, and that our approach consists in computing the best matrix $\overline{A}^{P,M}_{\eps,h}$ following the procedure described in Section~\ref{sse:stoch}.

The numerical results are collected in Figure~\ref{fi:3}, for several choices of the meshsizes. We observe that the matrix we compute converges to the homogenized matrix as $N$ increases. However, for any value of $N$ in the range we consider, the approximation of $A_\star$ obtained by the classical homogenization approach is slightly more accurate than the one obtained with our approach. As shown on Figure~\ref{fi:2}, the former approach is as expensive as our approach for $N \approx 14$, and slightly less expensive for larger values of $N$.

\medskip

\begin{verbbox}err_sto_mat\end{verbbox}
\begin{figure}[h!]
\centering
\begin{tikzpicture}[scale=1.15]
\begin{loglogaxis} [
xtick={8,16,32},
xticklabels={$8$,$16$,$32$}
]
\addplot[dashed,mark=x,blue] table[x=N,y=err_mat_N]{dat/eps_inf/sto/err_mat_min_27_100.dat};
\addplot[dashed,mark=x,blue] table[x=N,y=err_mat_N]{dat/eps_inf/sto/err_mat_max_27_100.dat};
\addplot[thick,mark=x,blue] table[x=N,y=err_mat_N]{dat/eps_inf/sto/err_mat_mean_27_100.dat};
\addplot[dashed,mark=o,blue] table[x=N,y=err_mat_N]{dat/eps_inf/sto/err_mat_min_108_100.dat};
\addplot[dashed,mark=o,blue] table[x=N,y=err_mat_N]{dat/eps_inf/sto/err_mat_max_108_100.dat};
\addplot[thick,mark=o,blue] table[x=N,y=err_mat_N]{dat/eps_inf/sto/err_mat_mean_108_100.dat};

\addplot[dashed,mark=x,black] table[x=N,y=err_mat_optim]{dat/eps_inf/sto/err_mat_min_27_100.dat};
\addplot[dashed,mark=x,black] table[x=N,y=err_mat_optim]{dat/eps_inf/sto/err_mat_max_27_100.dat};
\addplot[thick,mark=x,black] table[x=N,y=err_mat_optim]{dat/eps_inf/sto/err_mat_mean_27_100.dat};
\addplot[dashed,mark=o,black] table[x=N,y=err_mat_optim]{dat/eps_inf/sto/err_mat_min_108_100.dat};
\addplot[dashed,mark=o,black] table[x=N,y=err_mat_optim]{dat/eps_inf/sto/err_mat_max_108_100.dat};
\addplot[thick,mark=o,black] table[x=N,y=err_mat_optim]{dat/eps_inf/sto/err_mat_mean_108_100.dat};
\end{loglogaxis}
\end{tikzpicture}
\caption{Approximation of $A_\star$ by the classical homogenization approach (blue) and by our approach (black) in function of $N$, for $M=100$ realizations. Since $M$ is finite, the error \theverbbox~is actually random. We compute it $100$ times. The thick line corresponds to the mean value over the $100$ computations of the error. The dashed lines show the $95\%$ confidence interval. Results obtained with $h$ such that $\eps/h \approx 27$ (resp. $\eps/h \approx 108$) are denoted with {\tt x} (resp. {\tt o}).} \label{fi:3}
\end{figure}

\bigskip

\noindent Turning to the approximation of $\mathbb{E}(u_\eps)$ in the $L^2$ norm, we denote by
\begin{itemize}
\item[$\bullet$] $u_{\eps,h}^M(f)$ the expectation, as defined in~\eqref{eq:expec}, of the discrete solutions to~\eqref{eq:osc} with the oscillatory coefficients given by~\eqref{eq:sta.osc.1}--\eqref{eq:sta.osc.2} and the right-hand side $f$;
\item[$\bullet$] $u_{\star,h}(f)$ the discrete solution to~\eqref{eq:hom.0} with the exact homogenized matrix~\eqref{eq:sta.star} and the right-hand side $f$ (note that the exact matrix is usually unknown);
\item[$\bullet$] $u_{\star,h}^{N,M}(f)$ the discrete solution to~\eqref{eq:hom.0} with the matrix $A_{\star,H}^{N,M}$ and the right-hand side $f$;
\item[$\bullet$] $\overline{u}^{P,M}_{\eps,h}(f)$ the discrete solution to~\eqref{eq:bar.0} with the matrix $\overline{A}_{\eps,h}^{P,M}$ and the right-hand side $f$.
\end{itemize}
The $M$ realizations of the field $A(\cdot,\omega)$ we consider to compute $u^M_{\eps,h}(f)$, $u_{\star,h}^{N,M}(f)$ and $\overline{u}^{P,M}_{\eps,h}(f)$ are identical. 

To assess the quality of the approximation of $u^M_{\eps,h}$ by $\widehat{u}_h \in \left\{ u_{\star,h}, \ u_{\star,h}^{N,M}, \ \overline{u}^{P,M}_{\eps,h}\right\}$ in the $L^2$ norm, we define the criterion
\begin{equation} \label{eq:sta.l2}
\verb?err_sto_L2? = \left( \frac{ \dis \sup_{f\in V^{\cal Q}_{\rm n}({\cal D})} \norm[\Ltwo]{u^M_{\eps,h}(f) - \widehat{u}_h(f)}^2}{\norm[\Ltwo]{u_{\eps,h}^M\left(\widehat{f}_\eps\right)}^2}\right)^{1/2}.
\end{equation}
As in the periodic case, the supremum is taken over $f\in V^{\cal Q}_{\rm n}({\cal D})$ with ${\cal Q}=16 \gg P$, and $\widehat{f}_\eps\in V^{\cal Q}_{\rm n}({\cal D})$ denotes the argument of the $\sup$ problem. The numerical results are collected in Figure~\ref{fi:4}, for several choices of the meshsizes and of the total number $M$ of realizations.

We observe that the solution associated with the best matrix we compute is a better $L^2$-approximation (for the range of parameters considered here) of $\mathbb{E}(u_\eps)$ than the solutions associated with the exact or approximate homogenized matrices. Again, due to the small number $P$ of right-hand sides we consider to compute $\overline{A}_{\eps,h}^{P,M}$, this good accuracy is not an immediate consequence of our practical procedure (it would have been if we had taken $P$ extremely large). We also observe that the accuracy of the three approximations $u_{\star,h}$, $u_{\star,h}^{N,M}$ and $\overline{u}_{\eps,h}^{P,M}$ improves when $h$ decreases or when $M$ increases, in somewhat a complex manner. In terms of cost, our approach is again less expensive than the classical approach for $N \leq 14$.

\medskip

\begin{verbbox}err_sto_L2\end{verbbox}
\begin{figure}[h!]
\centering
\begin{tikzpicture}[scale=1.3]
\begin{loglogaxis} [
xtick={8,16,32},
xticklabels={$8$,$16$,$32$}
]
\addplot[thick,mark=x,red] table[x=N,y=err_l2_star]{dat/eps_inf/sto/err_l2_27_100.dat};
\addplot[thick,mark=o,red] table[x=N,y=err_l2_star]{dat/eps_inf/sto/err_l2_108_100.dat};
\addplot[thick,mark=+,red] table[x=N,y=err_l2_star]{dat/eps_inf/sto/err_l2_27_400.dat};
\addplot[thick,mark=square,red] table[x=N,y=err_l2_star]{dat/eps_inf/sto/err_l2_54_400.dat};

\addplot[thick,mark=x,blue] table[x=N,y=err_l2_staN]{dat/eps_inf/sto/err_l2_27_100.dat};
\addplot[thick,mark=o,blue] table[x=N,y=err_l2_staN]{dat/eps_inf/sto/err_l2_108_100.dat};
\addplot[thick,mark=+,blue] table[x=N,y=err_l2_staN]{dat/eps_inf/sto/err_l2_27_400.dat};
\addplot[thick,mark=square,blue] table[x=N,y=err_l2_staN]{dat/eps_inf/sto/err_l2_54_400.dat};

\addplot[thick,mark=x,black] table[x=N,y=err_l2_bar]{dat/eps_inf/sto/err_l2_27_100.dat};
\addplot[thick,mark=o,black] table[x=N,y=err_l2_bar]{dat/eps_inf/sto/err_l2_108_100.dat};
\addplot[thick,mark=+,black] table[x=N,y=err_l2_bar]{dat/eps_inf/sto/err_l2_27_400.dat};
\addplot[thick,mark=square,black] table[x=N,y=err_l2_bar]{dat/eps_inf/sto/err_l2_54_400.dat};
\end{loglogaxis}
\end{tikzpicture}
\caption{Approximation of $\mathbb{E}(u_\eps)$ in the $L^2$ norm (\theverbbox) by $u_{\star,h}$ (red), $u_{\star,h}^{N,M}$ (blue) and $\overline{u}_{\eps,h}^{P,M}$ (black) in function of $N$ (curves with {\tt x}: $\eps/h \approx 27$ and $M=100$; curves with {\tt o}: $\eps/h \approx 108$ and $M=100$; curves with {\tt +}: $\eps/h \approx 27$ and $M=400$; curves with $\square$: $\eps/h \approx 54$ and $M=400$).} \label{fi:4}
\end{figure}

\bigskip

We next turn to the $H^1$-error. We denote by $C^{N,M}_{\eps,h}$ the approximation of the deterministic matrix $C_\eps$ defined by~\eqref{eq:Ceps_sto} by an empirical mean over $M$ realizations of the corrector functions, solution to~\eqref{eq:corr.N}:
$$
\left[ C^{N,M}_{\eps,h} \right]_{i,j} 
= 
\delta_{ij} + \frac{1}{M} \sum_{m=1}^M \partial_i w^N_{\vec{e}_j}(\cdot/\eps,\omega_m).
$$
For $f\in\Ltwo$, we denote by $C^{N,M}_{\eps,h} \grad u_{\star,h}(f)$ and $C^{N,M}_{\eps,h} \grad u^{N,M}_{\star,h}(f)$ the two discrete equivalents of $C_\eps \, \grad u_\star(f)$, the homogenization-based approximation of $\mathbb{E}\left(\grad u_\eps(f)\right)$, obtained by using the exact homogenized matrix~\eqref{eq:sta.star} and the matrix $A_{\star,H}^{N,M}$, respectively, to compute an approximation of $u_\star(f)$. In our approach, we seek a discrete approximation of $\mathbb{E}\left(\grad u_\eps\right)$ under the form $\overline{C}^{R,M}_{\eps,h} \grad \overline{u}^{P,M}_{\eps,h}$, with $R=P=3$. For 
$$
\widehat{C}_{\eps,h}^M \, \grad \widehat{u}_h \in \left\{ C^{N,M}_{\eps,h} \, \grad u_{\star,h}, \ C^{N,M}_{\eps,h} \, \grad u_{\star,h}^{N,M}, \ \overline{C}^{R,M}_{\eps,h} \, \grad\overline{u}^{P,M}_{\eps,h} \right\},
$$ 
we define the criterion
\begin{equation} \label{eq:sta.h1}
\verb?err_sto_H1? = \left( \frac{ \dis \sup_{f\in V^{\cal Q}_{\rm n}({\cal D})} \norm[L^2({\cal D} \setminus{\cal B})]{\grad u_{\eps,h}^M(f) - \widehat{C}_{\eps,h}^M \ \grad \widehat{u}_h(f)}^2}{\norm[L^2({\cal D} \setminus{\cal B})]{\grad u_{\eps,h}^M \left( \widehat{f}_\eps \right)}^2}\right)^{1/2},
\end{equation}
where, here again, the supremum is taken over the space $V^{\cal Q}_{\rm n}({\cal D})$ for ${\cal Q}=16 \gg P$, $\widehat{f}_\eps\in V^{\cal Q}_{\rm n}({\cal D})$ denotes the argument of the $\sup$ problem, and boundary elements ${\cal B}$ are removed from the evaluation criterion, as in the periodic case~\eqref{eq:per.h1}. We recall that, in~\eqref{eq:sta.h1}, $u_{\eps,h}^M(f)$ is the empirical mean~\eqref{eq:expec} over $M$ realizations of $u_{\eps,h}(f;\omega)$. It is thus an approximation to $\mathbb{E} \left[ u_\eps(f) \right]$. 

\medskip

The numerical results are collected in Table~\ref{ta:3}. We see that our surrogate defines an approximation of $\mathbb{E}(\grad u_\eps)$ which is systematically better than that provided by the classical homogenization approach, for any choice of $h$ and $M$.

\medskip

\begin{verbbox}err_sto_H1\end{verbbox}
\begin{table}[h!]
\begin{center}
\begin{tabular}{|c||c|c|c|}
\hline
$N$ & $8$ & $16$ & $32$ \\
\hline
\verb?err_sto_H1? for $C^{N,M}_{\eps,h} \grad u_{\star,h}$ ($\eps/h\approx 27$, $M=100$) & $1.043 \ 10^{-1}$ & $9.635 \ 10^{-2}$ & $9.394 \ 10^{-2}$ \\
\hline
\hspace{4.5cm} ($\eps/h\approx 108$, $M=100$) & $8.648 \ 10^{-2}$ & $8.120 \ 10^{-2}$ & $8.010 \ 10^{-2}$ \\
\hline
\hspace{4.7cm} ($\eps/h\approx 27$, $M=400$) & $8.542 \ 10^{-2}$ & $7.828 \ 10^{-2}$ & $7.298 \ 10^{-2}$ \\
\hline
\hspace{4.7cm} ($\eps/h\approx 54$, $M=400$) & $6.599 \ 10^{-2}$ & $6.222 \ 10^{-2}$ & $6.067 \ 10^{-2}$ \\
\hline
\hline
\verb?err_sto_H1? for $C^{N,M}_{\eps,h} \grad u^{N,M}_{\star,h}$ ($\eps/h\approx 27$, $M=100$) & $9.799 \ 10^{-2}$ & $9.095 \ 10^{-2}$ & $8.961 \ 10^{-2}$ \\
\hline
\hspace{4.8cm} ($\eps/h\approx 108$, $M=100$) & $8.620 \ 10^{-2}$ & $8.022 \ 10^{-2}$ & $7.952 \ 10^{-2}$ \\
\hline
\hspace{5.cm} ($\eps/h\approx 27$, $M=400$) & $7.605 \ 10^{-2}$ & $7.173 \ 10^{-2}$ & $6.780 \ 10^{-2}$ \\
\hline
\hspace{5.cm} ($\eps/h\approx 54$, $M=400$) & $6.142 \ 10^{-2}$ & $5.957 \ 10^{-2}$ & $5.872 \ 10^{-2}$ \\
\hline
\hline
\verb?err_sto_H1? for $\overline{C}^{R,M}_{\eps,h} \grad \overline{u}^{P,M}_{\eps,h}$ ($\eps/h\approx 27$, $M=100$) & $6.000 \ 10^{-2}$ & $4.542 \ 10^{-2}$ & $3.018 \ 10^{-2}$ \\
\hline
\hspace{4.7cm} ($\eps/h\approx 108$, $M=100$) & $5.912 \ 10^{-2}$ & $4.657 \ 10^{-2}$ & $3.596 \ 10^{-2}$ \\
\hline
\hspace{5.cm} ($\eps/h\approx 27$, $M=400$) & $3.030 \ 10^{-2}$ & $3.814 \ 10^{-2}$ & $2.625 \ 10^{-2}$ \\
\hline
\hspace{5.cm} ($\eps/h\approx 54$, $M=400$) & $5.157 \ 10^{-2}$ & $3.613 \ 10^{-2}$ & $2.849 \ 10^{-2}$ \\
\hline
\end{tabular}
\end{center}
\caption{Approximation of $\mathbb{E}(\grad u_\eps)$ in the $L^2$ norm (\theverbbox) by $C^{N,M}_{\eps,h} \grad u_{\star,h}$, $C^{N,M}_{\eps,h} \grad u^{N,M}_{\star,h}$ and $\overline{C}^{R,M}_{\eps,h} \grad \overline{u}^{P,M}_{\eps,h}$ in function of $N$ (the various lines correspond to various values of $h$ and $M$).} \label{ta:3}
\end{table}

\subsection{Results in the case $\eps\geq\overline{\eps}$} \label{sse:eps.sup}

In the regime $\eps\geq\overline{\eps}$, we quantitatively investigate whether the best constant matrix provided by our approach allows for an accurate approximation of the exact solution, in the $L^2$ norm in the sense of the criteria~\eqref{eq:per.l2} or~\eqref{eq:sta.l2}, and in the $H^1$ norm in the sense of the criteria~\eqref{eq:per.h1} or~\eqref{eq:sta.h1}. 

We also consider below the criterion~\eqref{eq:per.mat}, only in the periodic setting. It is indeed interesting to quantify the threshold value of $\eps$ above which $\overline{A}_\eps$ is significantly different from $A_\star$ (let alone to understand the practical limitation of homogenization theory).

When considering large values of the parameter $\eps$, it is necessary to consider $P$ right-hand sides with $P$ larger than $d(d+1)/2=3$, as pointed out in Section~\ref{ssse:sup}. This value depends on $\eps$ and is denoted $P(\eps)$.

\subsubsection{Results in the periodic setting} \label{ssse:eps.sup.per}

We consider the set $\{ \eps_k \}_{0\leq k\leq 2}$ of parameters introduced in Section~\ref{ssse:eps.inf.per}. For $0\leq k\leq 2$, we have $\eps_k\geq\overline{\eps}$. We choose the number of right-hand sides as $P(\eps_0)=9$ and $P(\eps_1)=P(\eps_2)=5$ (we recall that $P(\eps_k)=3$ for $3\leq k\leq 6$). Considering less right-hand sides significantly alters the approximation results, while considering more right-hand sides does not significantly improve these results.

We consider the evaluation criteria~\eqref{eq:per.mat},~\eqref{eq:per.l2} and~\eqref{eq:per.h1}. We keep ${\cal Q}=16$ functions in the test-space $V_{\rm n}^{\cal Q}({\cal D})$. For the $H^1$-reconstruction, we choose the number of right-hand sides $R(\eps)$ such that $R(\eps_0)=R(\eps_1)=5$ and $R(\eps_2)=3$ (which satisfies $R(\eps) \leq P(\eps)$). The numerical results for the approximation of the homogenized matrix, the $L^2$-approximation and the $H^1$-approximation, are respectively collected in Table~\ref{ta:4}, Figure~\ref{fi:5} and Table~\ref{ta:5}.

\medskip

We observe on Table~\ref{ta:4} that the approximation of the homogenized matrix provided by our approach highly improves when decreasing $\eps$ from $\eps=0.4$ to $\eps=0.2$. For $\eps \geq 0.4$, the homogenized matrix does not correctly describe the medium.

\begin{verbbox}err_per_mat\end{verbbox}
\begin{table}[h!]
\begin{center}
\begin{tabular}{|c||c|c|c|}
\hline
$\eps$ & $0.4$ & $0.2$ & $0.1$ \\
\hline
\verb?err_per_mat? & $3.8420 \ 10^{-2}$ & $3.7056 \ 10^{-3}$ & $1.8623 \ 10^{-3}$ \\
\hline
\end{tabular}
\end{center}
\caption{Approximation of $A_\star$ (\theverbbox) in function of $\eps$ (here $\eps/h \approx 43$).} \label{ta:4}
\end{table}

\bigskip

Figure~\ref{fi:5} confirms this observation when it comes to the solution itself. We have seen that, for $\eps=0.4$, $A_\star$ and $\overline{A}_\eps$ are significantly different. The solutions $u_\star$ and $\overline{u}_\eps = \overline{u}(\overline{A}_\eps)$ are also significantly different, the latter being a much better $L^2$-approximation of $u_\eps$ than the former or the first-order two-scale expansion. For smaller values of $\eps$, we already observe the behavior we have described in Section~\ref{ssse:eps.inf.per}. Similar comments apply to the approximation of $\grad u_\eps$ (see Table~\ref{ta:5}).

\medskip

\begin{verbbox}err_per_L2\end{verbbox}
\begin{figure}[h!]
\centering
\begin{tikzpicture}[scale=1.]
\begin{loglogaxis} [
xtick={0.4,0.2,0.1},
xticklabels={$0.4$,$0.2$,$0.1$}
]
\addplot[thick,mark=x,red] table[x=epsilon,y=err_l2_star]{dat/eps_sup/det/err_l2_43.dat};

\addplot[thick,mark=x,brown] table[x=epsilon,y=err_l2_star_corr]{dat/eps_sup/det/err_l2_43.dat};

\addplot[thick,mark=x,black] table[x=epsilon,y=err_l2_bar]{dat/eps_sup/det/err_l2_43.dat};
\end{loglogaxis}
\end{tikzpicture}
\caption{Approximation of $u_\eps$ in the $L^2$ norm (\theverbbox) by $u_{\star,h}$ (red), $u_{\eps,h}^{1,\vec{\theta}}$ (brown) and $\overline{u}^P_{\eps,h}$ (black) in function of $\eps$ (here $\eps/h \approx 43$). These quantities are defined in Section~\ref{ssse:eps.inf.per}.} \label{fi:5}
\end{figure}

\medskip

\begin{verbbox}err_per_H1\end{verbbox}
\begin{table}[h!]
\begin{center}
\begin{tabular}{|c||c|c|c|}
\hline
$\eps$ & $0.4$ & $0.2$ & $0.1$ \\
\hline
\verb?err_per_H1? for $C_{\eps,h}\grad u_{\star,h}$ & $9.5890 \ 10^{-2}$ & $4.8421 \ 10^{-2}$ & $3.3923 \ 10^{-2}$ \\
\hline
\hline
\verb?err_per_H1? for $\overline{C}^R_{\eps,h}\grad\overline{u}^P_{\eps,h}$ & $8.7591 \ 10^{-2}$ & $5.8225 \ 10^{-2}$ & $3.2373 \ 10^{-2}$ \\
\hline
\end{tabular}
\end{center}
\caption{Approximation of $\grad u_\eps$ in the $L^2$ norm (\theverbbox) by $C_{\eps,h}\grad u_{\star,h}$ and $\overline{C}^R_{\eps,h}\grad\overline{u}^P_{\eps,h}$ in function of $\eps$ (here $\eps/h \approx 43$). See Section~\ref{ssse:eps.inf.per} for a definition of these quantities.} \label{ta:5}
\end{table}

\subsubsection{Results in the stationary ergodic setting} \label{ssse:eps.sup.sta}

We consider the sets $\{ \eps_k \}_{0\leq k\leq 2}$ and $\{ N_k \}_{0\leq k\leq 2}$ of parameters introduced in Section~\ref{ssse:eps.inf.sta}, for which we have $\eps_k>\overline{\eps}$. We choose the number of right-hand sides as $P(\eps_0)=9$ and $P(\eps_1)=P(\eps_2)=5$, and fix the number of Monte Carlo realizations to $M=100$.

We consider the evaluation criteria~\eqref{eq:sta.l2} and~\eqref{eq:sta.h1}, with ${\cal Q}=16$ functions in the test-space $V_{\rm n}^{\cal Q}({\cal D})$. For the $H^1$-reconstruction, the number of right-hand sides is chosen to be $R(\eps_0)=R(\eps_1)=5$ and $R(\eps_2)=3$. Note that again $R(\eps) \leq P(\eps)$. The numerical results for the $L^2$- and $H^1$-approximation are respectively collected in Figure~\ref{fi:6} and Table~\ref{ta:6}.

\begin{remark}
We note that, when working with $\eps = \eps_0 = 1/2$, we have, in view of~\eqref{eq:N}, $N=N_0=1$. In view of~\eqref{eq:sta.osc.1}--\eqref{eq:sta.osc.2}, it turns out that, in this case, there are only 16 different realizations of the field $a^{\rm sto}$. For this value of $\eps$, the expectation is computed by a simple enumeration of all the possible realizations. For $\eps = \eps_1 = 1/4$, there are already 65,536 realizations, and expectations are computed by empirical means over $M$ realizations.
\end{remark}

\medskip

On Figure~\ref{fi:6}, we observe that the solution associated with the best matrix we compute is an approximation of $\mathbb{E}(u_\eps)$ (in the $L^2$ norm) generally more accurate than the solution associated with the exact homogenized matrix (since here $N$ is small, the approximate matrix $A_\star^{N,M}$ is not expected to be an accurate approximation of $A_\star$). Table~\ref{ta:6} shows that our surrogate defines an approximation of $\mathbb{E}(\grad u_\eps)$, the accuracy of which is comparable, and often much better, to that provided by the homogenization approach. For the small values of $N$ considered here, our approach is less expensive than the classical homogenization approach.

\medskip

\begin{verbbox}err_sto_L2\end{verbbox}
\begin{figure}[h!]
\centering
\begin{tikzpicture}[scale=1.]
\begin{loglogaxis} [
xtick={1,2,4},
xticklabels={$1$,$2$,$4$}
]
\addplot[thick,mark=x,red] table[x=N,y=err_l2_star]{dat/eps_sup/sto/err_l2_27_100_FRED_BAR.dat};
\addplot[dashed,mark=x,red] table[x=N,y=err_l2_star_M]{dat/eps_sup/sto/err_l2_27_100_FRED_BAR.dat};
\addplot[dashed,mark=x,red] table[x=N,y=err_l2_star_P]{dat/eps_sup/sto/err_l2_27_100_FRED_BAR.dat};

\addplot[thick,mark=x,black] table[x=N,y=err_l2_bar]{dat/eps_sup/sto/err_l2_27_100_FRED_BAR.dat};
\addplot[dashed,mark=x,black] table[x=N,y=err_l2_bar_M]{dat/eps_sup/sto/err_l2_27_100_FRED_BAR.dat};
\addplot[dashed,mark=x,black] table[x=N,y=err_l2_bar_P]{dat/eps_sup/sto/err_l2_27_100_FRED_BAR.dat};
\end{loglogaxis}
\end{tikzpicture}
\caption{Approximation of $\mathbb{E}(u_\eps)$ in the $L^2$ norm (\theverbbox) by $u_{\star,h}$ (red) and $\overline{u}_{\eps,h}^{P,M}$ (black) in function of $N$. For $N \geq 2$, all expectations are approximated by an empirical mean over $M=100$ realizations. Since $M$ is finite, results are random. We have performed the overall computation $10$ times and show the corresponding $95\%$ confidence interval (here $\eps/h \approx 27$).} \label{fi:6}
\end{figure}

\medskip

\begin{verbbox}err_sto_H1\end{verbbox}
\begin{table}[h!]
\begin{center}
\begin{tabular}{|c||c|c|c|}
\hline
$N$ & $1$ & $2$ & $4$ \\
\hline
\verb?err_sto_H1? for $C^{N,M}_{\eps,h} \grad u_{\star,h}$ & $1.4947 \ 10^{-1}$ & $1.3091 \ 10^{-1}$ & $1.0720 \ 10^{-1}$ \\
\hline
\verb?err_sto_H1? for $\overline{C}^{R,M}_{\eps,h} \grad \overline{u}^{P,M}_{\eps,h}$ & $1.0955 \ 10^{-1}$ & $1.4595 \ 10^{-1}$ & $6.9334 \ 10^{-2}$ \\
\hline
\end{tabular}
\end{center}
\caption{Approximation of $\mathbb{E}(\grad u_\eps)$ in the $L^2$ norm (\theverbbox) by $C^{N,M}_{\eps,h} \grad u_{\star,h}$ and $\overline{C}^{R,M}_{\eps,h} \grad \overline{u}^{P,M}_{\eps,h}$ in function of $N$, for $M=100$ and $\eps/h \approx 27$ (see Section~\ref{ssse:eps.inf.sta} for a definition of these quantities).} \label{ta:6}
\end{table}

\section*{Acknowledgments}

The authors would like to thank Albert Cohen (Universit\'e Pierre et Marie Curie) for stimulating and enlightning discussions about the work reported in this article, and in particular for suggesting the cost function in~\eqref{eq:infsup.0} in replacement of that in~\eqref{eq:infsup.old}, for providing the perspective of an optimization upon the class of matrices $\overline{A}$ that are considered, as detailed in Section~\ref{sec:outline}, and for carefully reading a preliminary version of this manuscript.

The authors also acknowledge several constructive comments by the two anonymous referees, which have allowed to improve (in particular with Remarks~\ref{re:darcy} and~\ref{rem:preuve_referee}) the original version of this manuscript.

The work of CLB, FL and SL is partially supported by EOARD under Grant FA8655-13-1-3061. The work of CLB and FL is also partially supported by ONR under Grants N00014-12-1-0383 and N00014-15-1-2777.

\appendix

\section{Proof of Proposition~\ref{pr:as.cons}} \label{se:proof}

\subsection{Preliminary results}

Before we are in position to show Proposition~\ref{pr:as.cons}, we first need to prove the following two preliminary lemmas, namely Lemma~\ref{le:test} and Lemma~\ref{le:recons}.

\begin{lemma} \label{le:test}
Under the assumptions~\eqref{eq:ass.per} and~\eqref{eq:ass.bound.per}, the following convergence holds:
\begin{equation} \label{eq:test}
\lim_{\eps\to 0} \Phi_\eps(A_\star) = 0.
\end{equation}
\end{lemma}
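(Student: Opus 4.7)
The plan is to reformulate $\Phi_\eps(A_\star,f)$ so as to expose its dependence on $A_\star-A_\eps$, and then run a compactness-and-contradiction argument based on H-convergence. For any $f\in\Ltwo$, set
\[
z_\eps(f) = (-\Delta)^{-1}\bigl(\div(A_\star\grad u_\eps(f))+f\bigr) \in H^1_0({\cal D}).
\]
Since the oscillatory equation~\eqref{eq:osc.0} gives $f=-\div(A_\eps\grad u_\eps(f))$, the source inside the inverse laplacian rewrites as $\div((A_\star-A_\eps)\grad u_\eps(f))$, so $z_\eps(f)$ uniquely solves
\[
-\Delta z_\eps(f) = \div\bigl((A_\star-A_\eps)\grad u_\eps(f)\bigr) \ \ \text{in ${\cal D}$}, \qquad z_\eps(f) = 0 \ \ \text{on $\partial{\cal D}$}.
\]
Proving~\eqref{eq:test} thus reduces to showing that the linear operator $f\mapsto z_\eps(f)$, viewed from $\Ltwo$ to $\Ltwo$, tends to zero in operator norm as $\eps\to 0$.

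I would argue by contradiction. If the result fails, there exist $\delta>0$, a subsequence $\eps_n\to 0$, and normalized right-hand sides $f_n\in\Ltwon$ such that $\norm[\Ltwo]{z_{\eps_n}(f_n)}^2\geq\delta$. The uniform bound~\eqref{eq:ass.bound.per} forces $u_n:=u_{\eps_n}(f_n)$ to be bounded in $H^1_0({\cal D})$, and, up to extraction, $f_n\longrightharpoonup f$ weakly in $\Ltwo$ and $u_n\longrightharpoonup u$ weakly in $H^1_0({\cal D})$. The key observation is that the Rellich--Kondrachov compact embedding $\Ltwo\hookrightarrow H^{-1}({\cal D})$ upgrades the weak convergence $f_n\longrightharpoonup f$ in $\Ltwo$ to a strong convergence in $H^{-1}({\cal D})$. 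This strong $H^{-1}$-convergence of the sources is exactly what is needed to apply the H-convergence of $A_\eps$ towards $A_\star$ (which holds under~\eqref{eq:ass.per}--\eqref{eq:ass.bound.per}) with \emph{varying} right-hand sides: one obtains $u=u_\star(f)$, together with the weak convergence $A_{\eps_n}\grad u_n\longrightharpoonup A_\star\grad u_\star(f)$ in $\Ltwo^d$.

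To conclude, since $A_\star$ is constant, $A_\star\grad u_n\longrightharpoonup A_\star\grad u_\star(f)$ weakly in $\Ltwo^d$ as well. Subtracting yields $(A_\star-A_{\eps_n})\grad u_n\longrightharpoonup 0$ weakly in $\Ltwo^d$, so $\div((A_\star-A_{\eps_n})\grad u_n)\longrightharpoonup 0$ weakly in $H^{-1}({\cal D})$. Since $(-\Delta)^{-1}\colon H^{-1}({\cal D})\to H^1_0({\cal D})$ is a linear homeomorphism, $z_{\eps_n}(f_n)\longrightharpoonup 0$ weakly in $H^1_0({\cal D})$, and the compact embedding $H^1_0({\cal D})\hookrightarrow\Ltwo$ gives $z_{\eps_n}(f_n)\to 0$ strongly in $\Ltwo$, contradicting $\norm[\Ltwo]{z_{\eps_n}(f_n)}\geq\sqrt{\delta}$. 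The main obstacle is precisely this uniformity of the convergence over the unit ball of $\Ltwo$: a direct pointwise argument would only deliver $\Phi_\eps(A_\star,f)\to 0$ for each fixed $f$, not the convergence of the supremum. The Rellich embedding $\Ltwo\hookrightarrow H^{-1}({\cal D})$ is what bridges this gap, by turning the weak compactness of $\{f_n\}$ into strong $H^{-1}$-convergence, so that H-convergence with varying sources can absorb the oscillations of $A_\eps$.
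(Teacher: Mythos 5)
Your proof is correct, but it takes a genuinely different route from the paper's. The key structural difference: you begin by substituting $f=-\div(A_\eps\grad u_\eps(f))$ to rewrite the source as $\div\bigl((A_\star-A_\eps)\grad u_\eps(f)\bigr)$, and then run a compactness-by-contradiction argument whose engine is the stability of H-convergence under right-hand sides that converge \emph{strongly} in $H^{-1}({\cal D})$ (which you obtain from the compact embedding $\Ltwo\hookrightarrow H^{-1}({\cal D})$, a correct use of Schauder duality with Rellich). This lets you absorb the varying data $f_n$ and the oscillating coefficient simultaneously: the flux convergence $A_{\eps_n}\grad u_n\longrightharpoonup A_\star\grad u_\star(f)$, combined with the constancy of $A_\star$ and $\grad u_n\longrightharpoonup\grad u_\star(f)$, yields the weak-to-zero convergence of the divergence in one stroke, and then $(-\Delta)^{-1}\colon H^{-1}\to H^1_0\hookrightarrow\Ltwo$ upgrades it to strong $L^2$ convergence. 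The paper, by contrast, keeps the source in the form $\div(A_\star\grad u_\eps(f))+f$, extracts a (quasi-)maximizer $f_\star^\eps$ and its weak $L^2$ limit $f_\star^0$, splits by the triangle inequality into three terms $I_1^\eps$, $I_2^\eps$, $I_3^\eps$, and dispatches each with hands-on energy estimates and Rellich compactness: $I_1^\eps$ and $I_3^\eps$ handle the fluctuation $f_\star^\eps-f_\star^0$, while $I_2^\eps$ handles the fixed datum $f_\star^0$ via an auxiliary pair $(r_\eps,p_\eps)$ and the variational characterization of the homogenized limit. Both arguments are anchored on the same two facts (weak $L^2$ compactness of the maximizing sequence of data, and Rellich compactness to turn weak into strong convergence); yours is shorter and more conceptual but imports the H-convergence-with-varying-sources lemma as a black box, whereas the paper's decomposition recovers the needed stability by elementary estimates, at the cost of some length. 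One stylistic remark: your contradiction set-up implicitly uses that the supremum is (nearly) attained so that one can pick $f_n$ with $\norm[\Ltwo]{z_{\eps_n}(f_n)}^2\ge\delta$; this is harmless since $\Phi_{\eps}(A_\star,\cdot)$ is the quadratic form of the compact operator ${\cal H}_\eps^{A_\star}$ from Section~\ref{ssse:sup}, so the sup on the unit sphere is in fact attained, but it is worth saying explicitly.
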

We recall that $\Phi_\eps$ is defined by~\eqref{eq:Phi.sup}: for any $\overline{A}$, 
$$
\Phi_\eps(\overline{A}) = \sup_{f\in\Ltwon} \Phi_\eps(\overline{A},f) = \sup_{f\in\Ltwon} \norm[\Ltwo]{ (-\Delta)^{-1} \left( \div(\overline{A}\grad u_\eps(f))+f\right)}^2.
$$

\begin{proof}[Proof of Lemma~\ref{le:test}]
We use the notations and results of Section~\ref{sse:infsup}. Let $f_\star^\eps\in L^2_{\rm n}({\cal D})$ such that
\begin{equation} \label{eq:proof1.1}
\Phi_\eps(A_\star) = \norm[\Ltwo]{(-\Delta)^{-1}\left(\div(A_\star\grad u_\eps(f_\star^\eps))+f_\star^\eps\right)}^2,
\end{equation}
and let $C_{\rm P}>0$ be a Poincar\'e constant for ${\cal D}$, namely a constant such that, for any $v \in H^1_0({\cal D})$, we have $\norm[\Ltwo]{v} \leq C_{\rm P} \norm[\Ltwo]{\grad v}$. 

Using standard a priori estimates, we have, for any $f \in \Ltwo$, that
\begin{equation}
\label{eq:maison1}
\norm[\Ltwo]{(-\Delta)^{-1} f } \leq C_{\rm P}^2 \, \norm[\Ltwo]{f}.
\end{equation}
Using that $\alpha \leq A_\eps \leq \beta$ (see~\eqref{eq:ass.bound.per}), we likewise get that, for any $f \in \Ltwo$, 
\begin{equation}
\label{eq:apriori}
\norm[\Ltwo]{\grad u_\eps(f)} \leq \frac{C_{\rm P}}{\alpha} \norm[\Ltwo]{f}.
\end{equation}
We now estimate $z_\eps = (-\Delta)^{-1}\left( \div(A_\star\grad u_\eps(f)) \right)$. We recall that~\eqref{eq:ass.bound.per} implies that
\begin{equation}
\label{eq:maison2}
\alpha \leq A_\star \leq \beta.
\end{equation}
From the variational formulation satisfied by $z_\eps$, we obtain $\norm[\Ltwo]{\grad z_\eps} \leq | A_\star | \, \norm[\Ltwo]{\grad u_\eps(f)}$, which implies, using~\eqref{eq:apriori} and~\eqref{eq:maison2}, that $\norm[\Ltwo]{\grad z_\eps} \leq C_{\rm P} \, \beta/\alpha \, \norm[\Ltwo]{f}$, hence
\begin{equation}
\label{eq:maison3}
\norm[\Ltwo]{(-\Delta)^{-1}\left( \div(A_\star\grad u_\eps(f)) \right)} \leq C_{\rm P}^2 \, \frac{\beta}{\alpha} \, \norm[\Ltwo]{f}.
\end{equation}
Using~\eqref{eq:proof1.1}, \eqref{eq:maison3}, \eqref{eq:maison1} and the fact that $\norm[\Ltwo]{f_\star^\eps}=1$ for all $\eps>0$, we deduce that the sequence $\left\{ \Phi_\eps(A_\star) \right\}_{\eps>0}$ is uniformly bounded. There thus exists a subsequence, that we still denote by $\left\{ \Phi_\eps(A_\star) \right\}_{\eps>0}$, that converges in $\mathbb{R}$. Let us denote by $\overline{\Phi}$ its limit. We prove in the sequel that $\overline{\Phi} = 0$, which implies~\eqref{eq:test}.

\medskip

Since $\left\{ f_\star^\eps \right\}_{\eps>0}$ is uniformly bounded in $\Ltwo$, there exists a subsequence, again denoted $\left\{ f_\star^\eps \right\}_{\eps>0}$, that weakly converges in $\Ltwo$ when $\eps\to 0$ to some function $f_\star^0\in L^2({\cal D})$ which satisfies $\norm[\Ltwo]{f_\star^0} \leq 1$. From~\eqref{eq:proof1.1}, we infer, by the triangle inequality,
\begin{equation} \label{eq:proof1.2}
\left( \Phi_\eps(A_\star) \right)^{1/2} \leq I^\eps_1 + I^\eps_2 + I^\eps_3,
\end{equation}
with
\begin{eqnarray*}
I^\eps_1
&=&
\norm[\Ltwo]{(-\Delta)^{-1} \left( \div( A_\star \grad u_\eps(f_\star^\eps - f_\star^0))\right)},
\\
I^\eps_2
&=&
\norm[\Ltwo]{(-\Delta)^{-1}\left(\div(A_\star\grad u_\eps(f_\star^0))+f_\star^0\right)},
\\
I^\eps_3
&=&
\norm[\Ltwo]{(-\Delta)^{-1}(f_\star^\eps-f_\star^0)}.
\end{eqnarray*}
We successively show that $I^\eps_1$, $I^\eps_2$ and $I^\eps_3$ vanish with $\eps$.

\medskip

\noindent
{\bf Step 1: estimation of $I_1^\eps$.} Let $z_\eps = (-\Delta)^{-1}\left(\div \left[ A_\star\grad (u_\eps(f_\star^\eps-f_\star^0)) \right] \right)\in H^1_0({\cal D})$. We have
$$
\norm[\Ltwo]{\grad z_\eps}^2
=
-\int_{\cal D}A_\star\grad (u_\eps(f_\star^\eps-f_\star^0))\cdot\grad z_\eps
\leq 
\beta\norm[\Ltwo]{\grad(u_\eps(f_\star^\eps-f_\star^0))}\norm[\Ltwo]{\grad z_\eps},
$$
where we have used~\eqref{eq:maison2}. Using the Poincar\'e inequality, we deduce
$$
I_1^\eps
=
\norm[\Ltwo]{z_\eps}
\leq
C_{\rm P} \, \beta \, \norm[\Ltwo]{\grad(u_\eps(f_\star^\eps-f_\star^0))},
$$
thus, using~\eqref{eq:osc.0}, we get that
\begin{equation} \label{eq:proof1.3}
\left( I_1^\eps \right)^2
\leq
C_{\rm P}^2 \, \frac{\beta^2}{\alpha} \, \int_{\cal D} A_\eps \grad(u_\eps(f_\star^\eps-f_\star^0))\cdot\grad(u_\eps(f_\star^\eps-f_\star^0))
=
C_{\rm P}^2 \, \frac{\beta^2}{\alpha} \, \int_{\cal D}(f_\star^\eps-f_\star^0)\;u_\eps(f_\star^\eps-f_\star^0).
\end{equation}
From~\eqref{eq:apriori}, we also deduce
$$
\norm[\Ltwo]{\grad(u_\eps(f_\star^\eps-f_\star^0))} 
\leq 
\frac{C_{\rm P}}{\alpha} \, \norm[\Ltwo]{f_\star^\eps-f_\star^0} 
\leq
2 \, \frac{C_{\rm P}}{\alpha}.
$$
Using the Poincar\'e inequality, we obtain that the sequence $\left\{ u_\eps(f_\star^\eps-f_\star^0) \right\}_{\eps>0}$ is uniformly bounded in $H^1({\cal D})$. There thus exists a subsequence, that we again denote $\left\{ u_\eps(f_\star^\eps-f_\star^0) \right\}_{\eps>0}$, which is strongly convergent in $\Ltwo$. The right-hand side of~\eqref{eq:proof1.3} is therefore the $L^2$ product of a sequence that weakly converges to 0 times a sequence that strongly converges. We hence deduce from~\eqref{eq:proof1.3} that
\begin{equation}
\label{eq:resu1}
\lim_{\eps \to 0} I^\eps_1 = 0.
\end{equation}

\medskip

\noindent
{\bf Step 2: estimation of $I_2^\eps$.} Let $w_\eps = \div(A_\star\grad u_\eps(f_\star^0))+f_\star^0$, $r_\eps = (-\Delta)^{-1} w_\eps \in H^1_0({\cal D})$ and $p_\eps = (-\Delta)^{-1} r_\eps \in H^1_0({\cal D})$. Using the definition of $p_\eps$, we have
\begin{equation}
\label{eq:inter1}
\left( I_2^\eps \right)^2 
= 
\int_{\cal D} r^2_\eps 
= 
\int_{\cal D} \grad r_\eps \cdot \grad p_\eps.
\end{equation}
Using the definition of $r_\eps$, we have, for any $\phi \in H^1_0({\cal D})$,
\begin{equation}
\label{eq:inter2}
\int_{\cal D} \grad r_\eps \cdot \grad \phi
=
- \int_{\cal D} A_\star \grad u_\eps(f_\star^0) \cdot \grad \phi + \int_{\cal D} f_\star^0 \ \phi.
\end{equation}
Using~\eqref{eq:inter2} for $\phi \equiv p_\eps$, \eqref{eq:inter1} reads as
\begin{equation}
\label{eq:inter3}
\left( I_2^\eps \right)^2 
= 
- \int_{\cal D} A_\star \grad u_\eps(f_\star^0) \cdot \grad p_\eps + \int_{\cal D} f_\star^0 \ p_\eps.
\end{equation}
In order to pass to the limit $\eps \to 0$ in~\eqref{eq:inter3}, we establish some bounds. Using~\eqref{eq:inter2} with $\phi \equiv r_\eps$ and the bounds~\eqref{eq:maison2}, we deduce
$$
\norm[\Ltwo]{\grad r_\eps} \leq \beta \norm[\Ltwo]{\grad u_\eps(f_\star^0)} + C_{\rm P} \norm[\Ltwo]{f_\star^0},
$$
which (together with the Poincar\'e inequality and~\eqref{eq:apriori}) implies that $r_\eps$ is uniformly bounded in $H^1({\cal D})$. There thus exists $r_0 \in H^1_0({\cal D})$ such that, up to some extraction, $r_\eps$ converges to $r_0$, weakly in $H^1({\cal D})$ and strongly in $L^2({\cal D})$.

Passing to the limit $\eps \to 0$ in~\eqref{eq:inter2}, and using that $\grad u_\eps(f)$ weakly converges to $\grad u_\star(f)$, we deduce that, for any $\phi \in H^1_0({\cal D})$,
$$
\int_{\cal D} \grad r_0 \cdot \grad \phi
=
- \int_{\cal D} A_\star \grad u_\star(f_\star^0) \cdot \grad \phi + \int_{\cal D} f_\star^0 \ \phi = 0,
$$
in view of the variational formulation of~\eqref{eq:hom.0}. We hence get that $r_0 \equiv 0$. 

We now turn to $p_\eps$. We have $p_\eps = (-\Delta)^{-1} r_\eps \in H^1_0({\cal D})$ and $r_\eps$ converges to $r_0 = 0$, weakly in $H^1({\cal D})$ and strongly in $L^2({\cal D})$. Hence $p_\eps$ converges to 0 strongly in $H^1_0({\cal D})$. 

We now pass to the limit $\eps \to 0$ in~\eqref{eq:inter3}, and obtain
\begin{equation}
\label{eq:resu2}
\lim_{\eps \to 0} I_2^\eps = 0.
\end{equation}

\medskip

\noindent
{\bf Step 3: estimation of $I_3^\eps$.} Let $k_\eps = (-\Delta)^{-1}(f_\star^\eps-f_\star^0)$. We have
\begin{equation}
\label{eq:ap3}
\norm[\Ltwo]{\grad k_\eps}^2 = \int_{\cal D} (f_\star^\eps-f_\star^0) k_\eps,
\end{equation}
hence, using the Poincar\'e inequality,
$$
\norm[\Ltwo]{\grad k_\eps} \leq C_{\rm P} \norm[\Ltwo]{f_\star^\eps-f_\star^0} \leq 2 \, C_{\rm P}.
$$
The sequence $\{ k_\eps \}_{\eps>0}$ is thus uniformly bounded in $H^1({\cal D})$ and there exists a subsequence, that we again denote $\{ k_\eps \}_{\eps>0}$, which is strongly convergent in $\Ltwo$. Using that $f_\star^\eps-f_\star^0$ weakly converges to 0 in $\Ltwo$, we deduce from~\eqref{eq:ap3} that $\dis \lim_{\eps \to 0} \norm[\Ltwo]{\grad k_\eps}^2 = 0$, thus, again using the Poincar\'e inequality,
\begin{equation}
\label{eq:resu3}
\lim_{\eps \to 0} I_3^\eps = \lim_{\eps \to 0} \norm[\Ltwo]{k_\eps} = 0.
\end{equation}

\medskip

\noindent
{\bf Conclusion.} Collecting~\eqref{eq:proof1.2}, \eqref{eq:resu1}, \eqref{eq:resu2} and~\eqref{eq:resu3}, we obtain that $\Phi_\eps(A_\star)$ converges to zero as $\eps \to 0$. We thus have shown that $\overline{\Phi}=0$. The limit being independent of the subsequence that we have considered, we eventually deduce that the whole sequence $\{ \Phi_\eps(A_\star) \}_{\eps>0}$ converges to zero. This completes the proof of Lemma~\ref{le:test}.
\end{proof}

\medskip

In what follows, we identify the set of indices $\left\{ (i,j), \ \ 1 \leq i \leq j \leq d \right\}$ with the set of indices $\dis \left\{ m, \ \ 1 \leq m \leq \frac{d(d+1)}{2} \right\}$.

\begin{lemma} \label{le:recons}
There exist $\dis \frac{d \, (d+1)}{2}$ functions $f_{\star,k}\in\Ltwon$ and $\dis \frac{d \, (d+1)}{2}$ functions $\varphi_{\star,k}\in C^\infty_0({\cal D})$ such that the matrix $Z_\star \in \R^{\frac{d(d+1)}{2} \times \frac{d(d+1)}{2}}$ defined by
\begin{equation} \label{eq:Z.mat}
\forall\, 1\leq k\leq\frac{d \, (d+1)}{2},\quad\forall\, 1\leq i<j\leq d, \quad\begin{cases}
\dis \left[Z_\star\right]_{k,(i,i)} = \int_{\cal D} u_{\star,k}\;\partial_{ii}\varphi_{\star,k},
\\ \noalign{\vskip 4pt}
\dis \left[Z_\star\right]_{k,(i,j)} = 2\int_{\cal D} u_{\star,k}\;\partial_{ij}\varphi_{\star,k},
\end{cases}
\end{equation}
where $u_{\star,k} = u_\star(f_{\star,k})$ is the solution to~\eqref{eq:hom.0} with right-hand side $f_{\star,k}$, is invertible.
\end{lemma}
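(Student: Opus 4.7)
The plan is to construct the functions $f_{\star,k}$ and $\varphi_{\star,k}$ explicitly, so that $Z_\star$ turns out to be diagonal with strictly positive diagonal entries. Identify the index set $\{1,\dots,d(d+1)/2\}$ with $\{(i,j):1\leq i\leq j\leq d\}$, and pick the symmetric-matrix basis $\{M_k\}$ defined by $M_{(i,i)} := \vec{e}_i\vec{e}_i^T$ and $M_{(i,j)} := \tfrac{1}{2}(\vec{e}_i\vec{e}_j^T+\vec{e}_j\vec{e}_i^T)$ for $i<j$, tailored so that $[M_m]_{i,i} = \delta_{m,(i,i)}$ and $2[M_m]_{i,j} = \delta_{m,(i,j)}$ for $i<j$. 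The quadratic polynomial $q_k(\vec{x}) := \tfrac{1}{2}\vec{x}^T M_k \vec{x}$, which has constant Hessian equal to $M_k$, will serve as the ``germ'' of $u_{\star,k}$ on the support of $\varphi_{\star,k}$.

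Concretely, I would fix an open ball $B$ with $\overline{B} \subset {\cal D}$ and a cutoff $\eta \in C_0^\infty({\cal D})$ with $\eta \equiv 1$ on $B$, and set $\tilde u_k := \eta\, q_k \in C_0^\infty({\cal D}) \subset H^1_0({\cal D})$, $\tilde f_k := -\div(A_\star\grad\tilde u_k) \in \Ltwo$. Normalizing via $f_{\star,k} := \tilde f_k / \|\tilde f_k\|_{\Ltwo}$, the well-posedness of~\eqref{eq:hom.0} gives $u_{\star,k} := u_\star(f_{\star,k}) = \tilde u_k / \|\tilde f_k\|_{\Ltwo}$. For the test function, pick any nonnegative $\varphi_{\star,k} \in C_0^\infty(B)$ with $c_k := \int_{\cal D}\varphi_{\star,k} > 0$. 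Since $\varphi_{\star,k}$ is supported where $\eta \equiv 1$, two integrations by parts (with vanishing boundary terms) yield
$$
\int_{\cal D} u_{\star,k}\,\partial_{ij}\varphi_{\star,k} = \frac{1}{\|\tilde f_k\|_{\Ltwo}}\int_{\cal D} q_k\,\partial_{ij}\varphi_{\star,k} = \frac{c_k\,[M_k]_{i,j}}{\|\tilde f_k\|_{\Ltwo}}.
$$
By the defining property of $\{M_k\}$, the row of $Z_\star$ indexed by $k$ equals the strictly positive scalar $c_k/\|\tilde f_k\|_{\Ltwo}$ times the $k$-th canonical vector of $\R^{d(d+1)/2}$. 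Therefore $Z_\star$ is diagonal with strictly positive diagonal entries, and thus invertible.

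The only step requiring genuine argument, and hence the main (minor) obstacle, is the non-degeneracy $\tilde f_k \not\equiv 0$ needed to legitimize the normalization. If $\tilde f_k \equiv 0$ in $\Ltwo$, then $\tilde u_k \in H^1_0({\cal D})$ would solve the homogeneous Dirichlet problem $-\div(A_\star\grad u) = 0$; by coercivity of $A_\star$ (Lax--Milgram) $\tilde u_k \equiv 0$, so $q_k \equiv 0$ on the open ball $B$, contradicting $M_k \neq 0$ since a quadratic polynomial vanishing on an open set must be identically zero. Everything else is routine.
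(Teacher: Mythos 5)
Your proof is correct, and it takes a genuinely different route from the paper's. You give an \emph{explicit, constructive} choice: cut off the quadratic germ $q_k(\vec{x})=\tfrac12\vec{x}^T M_k\vec{x}$ with a bump $\eta$ that is $\equiv 1$ on a ball $B\subset\subset{\cal D}$, set $\tilde f_k=-\div(A_\star\grad(\eta q_k))$, normalize, and localize $\varphi_{\star,k}$ inside $B$, so that after two integrations by parts each row of $Z_\star$ reduces (via the tailored basis $\{M_k\}$) to a positive multiple of the $k$-th coordinate vector, making $Z_\star$ diagonal and hence invertible; the non-degeneracy $\tilde f_k\not\equiv 0$ is correctly justified by uniqueness for the homogenized Dirichlet problem and the fact that a quadratic with nonzero Hessian cannot vanish on an open set. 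The paper, by contrast, proceeds \emph{indirectly and inductively}: it shows at each step $k\leq d(d+1)/2$ that, were every admissible pair $(f_{\star,k},\varphi_{\star,k})$ to yield a row in the span of the previous ones, one could extract a nonzero vector $\vec{S}_\star$ that annihilates $\sum_{i\leq j}[\widehat{\vec{S}}_\star]_{(i,j)}\partial_{ij}f$ for \emph{all} $f\in\Ltwon$, forcing $\vec{S}_\star=\vec{0}$ and giving a contradiction. Your version is more elementary and makes the $d(d+1)/2$ right-hand sides concrete (useful if one wants to implement or visualize them); the paper's version avoids the choice of any specific geometry and highlights that the construction works for essentially generic data, needing only that the PDE map $f\mapsto u_\star(f)$ is well-defined and that $A_\star$ is symmetric constant. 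Both are complete proofs of the lemma.
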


\begin{proof}[Proof of Lemma~\ref{le:recons}]
In the Steps 1 and 2 below, we construct $f_{\star,k}\in L^2_{\rm n}({\cal D})$ and $\varphi_{\star,k}\in C^\infty_0({\cal D})$ inductively for $\dis 1 \leq k \leq d(d+1)/2$, such that the vector $\vec{E}_\star^k\in\mathbb{R}^{\frac{d(d+1)}{2}}$ defined by
\begin{equation} \label{eq:formula}
\forall\, 1\leq i<j\leq d,\qquad\begin{cases}
\dis \left[\vec{E}_\star^k\right]_{(i,i)} = \int_{\cal D}u_{\star,k}\;\partial_{ii}\varphi_{\star,k},
\\ \noalign{\vskip 3pt}
\dis \left[\vec{E}_\star^k\right]_{(i,j)} = 2\int_{\cal D}u_{\star,k}\;\partial_{ij}\varphi_{\star,k},
\end{cases}
\end{equation}
does not belong to $\text{Span}(\vec{E}_\star^1,\dots,\vec{E}_\star^{k-1})$. The vectors $\vec{E}_\star^1$, \dots, $\vec{E}_\star^{d(d+1)/2}$ being the rows of the matrix $Z_\star$, we deduce that $Z_\star$ is invertible.

\medskip

\noindent
\textbf{Step 1: Construction of $\vec{E}_\star^1$.} Choose $f_{\star,1} \in L^2_{\rm n}({\cal D})$ and $\varphi_{\star,1} \in C^\infty_0({\cal D})$ such that $\dis \int_{\cal D} f_{\star,1} \, \varphi_{\star,1}\neq 0$, and consider $\vec{E}^1_\star\in\mathbb{R}^{\frac{d(d+1)}{2}}$ defined by~\eqref{eq:formula} (where we recall that $u_{\star,1}$ is the solution to~\eqref{eq:hom.0} with right-hand side $f_{\star,1}$). Recalling that $A_\star$ is symmetric and constant, we have
$$
\sum_{1\leq i\leq j\leq d} \left[A_\star\right]_{i,j} \ \left[\vec{E}_\star^1\right]_{(i,j)}
=
-\int_{\cal D} A_\star \grad u_{\star,1} \cdot \grad \varphi_{\star,1}
=
-\int_{\cal D} f_{\star,1} \ \varphi_{\star,1} \neq 0,
$$
hence $\vec{E}_\star^1 \neq 0$.

\medskip

\noindent
\textbf{Step 2: Induction.} We assume that we have constructed $f_{\star,1}$, \dots, $f_{\star,k-1}$ and $\varphi_{\star,1}$, \dots, $\varphi_{\star,k-1}$ such that the family $\vec{E}_\star^1$, \dots, $\vec{E}_\star^{k-1}$ is free, for $\dis k \leq d(d+1)/2$. We now construct $f_{\star,k}\in L^2_{\rm n}({\cal D})$ and $\varphi_{\star,k}\in C^\infty_0({\cal D})$ such that the vector $\vec{E}_\star^k\in\mathbb{R}^{\frac{d(d+1)}{2}}$ defined in~\eqref{eq:formula} does not belong to $\text{Span}(\vec{E}_\star^1,\dots,\vec{E}_\star^{k-1})$. 

We proceed by contradiction and assume that, for any such $f_{\star,k}$ and $\varphi_{\star,k}$, there exist $\lambda_\ell(f_{\star,k},\varphi_{\star,k}) \in \mathbb{R}$, $1\leq \ell\leq k-1$, such that 
$$
\vec{E}_\star^k = \sum_{\ell=1}^{k-1} \lambda_\ell(f_{\star,k},\varphi_{\star,k}) \, \vec{E}_\star^\ell.
$$
For any vector $\vec{S}_\star\in\mathbb{R}^{\frac{d(d+1)}{2}}$, we have
$$
\sum_{1\leq i\leq j\leq d} \int_{\cal D} \left[ \widehat{\vec{S}}_\star \right]_{(i,j)} \; \partial_{ij} u_{\star,k} \; \varphi_{\star,k}
=
\sum_{1\leq i\leq j\leq d} [\vec{S}_\star]_{(i,j)} \left[\vec{E}_\star^k\right]_{(i,j)}
=
\sum_{\ell=1}^{k-1} \lambda_\ell(f_{\star,k},\varphi_{\star,k}) \, \vec{S}_\star \cdot \vec{E}_\star^\ell,
$$
where, for any $\vec{S} \in \mathbb{R}^{\frac{d(d+1)}{2}}$ and $\vec{E} \in \mathbb{R}^{\frac{d(d+1)}{2}}$, we denote $\dis \vec{S} \cdot \vec{E} = \sum_{m=1}^{d(d+1)/2} [\vec{S}]_m \, [\vec{E}]_m$, and where $\widehat{\vec{S}_\star} \in \mathbb{R}^{\frac{d(d+1)}{2}}$ is defined, for any $1 \leq i < j \leq d$, by
$$
\left[ \widehat{\vec{S}}_\star \right]_{(i,i)} = \left[ \vec{S}_\star \right]_{(i,i)},
\qquad
\left[ \widehat{\vec{S}}_\star \right]_{(i,j)} = 2 \left[ \vec{S}_\star \right]_{(i,j)}.
$$
Since $\dis k-1< d(d+1)/2$, there exists $\vec{S}_\star\in\mathbb{R}^{\frac{d(d+1)}{2}}$, $\vec{S}_\star\neq\vec{0}$, such that $\vec{S}_\star\cdot\vec{E}_\star^\ell=0$ for all $1\leq \ell\leq k-1$, and thus
$$
\forall \varphi_{\star,k} \in C^\infty_0({\cal D}), \qquad \sum_{1\leq i\leq j\leq d} \int_{\cal D} \left[ \widehat{\vec{S}}_\star \right]_{(i,j)} \; \partial_{ij} u_{\star,k} \; \varphi_{\star,k}=0.
$$
Since $\vec{S}_\star$ (and thus $\widehat{\vec{S}}_\star$) only depends on $\vec{E}_\star^1$, \dots, $\vec{E}_\star^{k-1}$ and not on $\varphi_{\star,k}$, this implies
$$
\sum_{1\leq i\leq j\leq d} \left[ \widehat{\vec{S}}_\star \right]_{(i,j)} \; \partial_{ij} u_{\star,k}=0 \ \ \text{in the sense of distributions,}
$$
thus 
$$
0=-\sum_{1\leq i\leq j\leq d} \left[ \widehat{\vec{S}}_\star \right]_{(i,j)} \; \partial_{ij} \div \left[ A_\star \grad u_{\star,k} \right] 
=
\sum_{1\leq i\leq j\leq d} \left[ \widehat{\vec{S}}_\star \right]_{(i,j)} \; \partial_{ij}f_{\star,k},
$$
for any $f_{\star,k}\in L^2_{\rm n}({\cal D})$. Since $\dis \left[ \widehat{\vec{S}}_\star \right]_{(i,j)}$ does not depend on $f_{\star,k}$, this shows that $\widehat{\vec{S}}_\star$, and thus $\vec{S}_\star$, vanishes. We reach a contradiction. We thus obtain the existence of $f_{\star,k}\in L^2_{\rm n}({\cal D})$ and $\varphi_{\star,k}\in C^\infty_0({\cal D})$ such that the vectors $\vec{E}_\star^1$, \dots, $\vec{E}_\star^{k-1}$, $\vec{E}_\star^k$ form a free family.
\end{proof}

\subsection{Proof of Proposition~\ref{pr:as.cons}}
\label{sec:app_preuve}

We can now perform the proof of Proposition~\ref{pr:as.cons}. The convergence~\eqref{eq:test} proved in Lemma~\ref{le:test} readily shows~\eqref{eq:lim.Ieps}. We are left with showing~\eqref{eq:lim.Abareps}. Using the functions $f_{\star,k} \in \Ltwon$ and $\varphi_{\star,k} \in C^\infty_0({\cal D})$ defined by Lemma~\ref{le:recons}, we introduce the matrix $Z_\eps\in\R^{\frac{d(d+1)}{2}\times\frac{d(d+1)}{2}}$ defined by
\begin{equation*} %\label{eq:Z.eps}
\forall\, 1\leq k\leq\frac{d(d+1)}{2},\quad \forall\, 1\leq i<j\leq d,\quad\begin{cases}
\dis \left[Z_\eps\right]_{k,(i,i)} = \int_{\cal D} u_{\eps,k}\;\partial_{ii}\varphi_{\star,k},
\\ \noalign{\vskip 4pt}
\dis \left[Z_\eps\right]_{k,(i,j)} = 2\int_{\cal D} u_{\eps,k}\;\partial_{ij}\varphi_{\star,k},
\end{cases}
\end{equation*}
where $u_{\eps,k} = u_\eps(f_{\star,k})$ is the solution to~\eqref{eq:osc.0} with right-hand side $f_{\star,k}$. Note that, for the second index of $Z_\eps$, we have again identified the sets $\left\{ (i,j), \ 1 \leq i \leq j \leq d \right\}$ and $\dis \left\{ m, \ 1 \leq m \leq \frac{d(d+1)}{2} \right\}$.

Since $u_{\eps,k}$ converges to $u_{\star,k}$ in $L^2({\cal D})$, the matrix $Z_\eps$ converges to the matrix $Z_\star$ defined by~\eqref{eq:Z.mat} when $\eps$ goes to zero. We have proved in Lemma~\ref{le:recons} that the matrix $Z_\star$ is invertible. This implies that the matrix $Z_\eps$ is invertible for $\eps$ sufficiently small, and that $Z_\eps^{-1}$ is bounded independently of $\eps$.

\medskip

We now introduce the vectors $\overline{\vec{V}}^\flat_\eps$ and $\vec{V}_\star$ in $\mathbb{R}^{\frac{d(d+1)}{2}}$ such that
$$
\forall\, 1\leq i\leq j\leq d,
\qquad
\left[\overline{\vec{V}}^\flat_\eps\right]_{(i,j)} = \left[\overline{A}^\flat_\eps\right]_{i,j},
\qquad
\left[\vec{V}_\star\right]_{(i,j)} = \left[A_\star\right]_{i,j},
$$
where we recall that $\overline{A}^\flat_\eps$ is a quasi-minimizing sequence of the functional~\eqref{eq:Phi.sup} (see~\eqref{eq:estim}). It can easily be seen that, for any $\overline{A} \in {\cal S}$, denoting $\overline{\vec{V}} \in \mathbb{R}^{\frac{d(d+1)}{2}}$ the vector such that $[\overline{\vec{V}}]_{(i,j)} = \overline{A}_{i,j}$ for any $1\leq i\leq j\leq d$, the following holds: for any $1\leq k \leq d(d+1)/2$,
\begin{equation} \label{eq:Z.bar}
\left[Z_\eps \ \overline{\vec{V}}\right]_k
=
\int_{\cal D}u_{\eps,k}\,\div(\overline{A}\grad\varphi_{\star,k})
=
\int_{\cal D} \div(\overline{A}\grad u_{\eps,k}) \ \varphi_{\star,k}
=
- \int_{\cal D} (-\Delta)^{-1}\left[\div(\overline{A}\grad u_{\eps,k})\right] \, \Delta\varphi_{\star,k},
\end{equation}
where $Z_\eps \ \overline{\vec{V}} \in \mathbb{R}^{\frac{d(d+1)}{2}}$ is the product of the matrix $Z_\eps\in\R^{\frac{d(d+1)}{2}\times\frac{d(d+1)}{2}}$ by the vector $\overline{\vec{V}} \in \mathbb{R}^{\frac{d(d+1)}{2}}$: for any $1\leq k \leq d(d+1)/2$, $\dis \left[Z_\eps \ \overline{\vec{V}}\right]_k = \sum_{1 \leq i \leq j \leq d} \left[Z_\eps\right]_{k,(i,j)} [\overline{\vec{V}}]_{(i,j)}$.

Now, for any $f\in L^2_{\rm n}({\cal D})$, we observe that
\begin{equation} \label{eq:rem}
\begin{alignedat}{1}
\norm[\Ltwo]{(-\Delta)^{-1} \left[ \div\left(\overline{A}^\flat_\eps\grad u_\eps(f) \right)-\div \Big(A_\star\grad u_\eps(f) \Big) \right]}^2
&\leq 2\left(\Phi_\eps(\overline{A}^\flat_\eps)+\Phi_\eps(A_\star)\right) \\
&\leq 2\left(I_\eps+\eps+\Phi_\eps(A_\star)\right) \\
&\leq 2\left(2\Phi_\eps(A_\star)+\eps\right).
\end{alignedat}
\end{equation}
Hence, applying this to $f \equiv f_{\star,k}$, and owing to Lemma~\ref{le:test},
$$
\norm[\Ltwo]{(-\Delta)^{-1}\left[ \div \left( \overline{A}^\flat_\eps\grad u_{\eps,k} \right) \right] - (-\Delta)^{-1} \Big[ \div \Big( A_\star\grad u_{\eps,k} \Big) \Big]}
$$
vanishes with $\eps$, for any $1\leq k \leq d(d+1)/2$.

We next deduce from~\eqref{eq:Z.bar} that $Z_\eps(\overline{\vec{V}}^\flat_\eps-\vec{V}_\star)$ vanishes as $\eps \to 0$. Since $Z_\eps$ is invertible when $\eps$ is sufficiently small (with $Z_\eps^{-1}$ bounded independently of $\eps$), we obtain that $\dis \lim_{\eps \to 0} \overline{\vec{V}}^\flat_\eps = \vec{V}_\star$, which is exactly the claimed convergence~\eqref{eq:lim.Abareps}. This concludes the proof of Proposition~\ref{pr:as.cons}.

\begin{remark} \label{re:proof}
Since the above proof uses~\eqref{eq:rem} precisely for the functions $f_{\star,k}$, $1\leq k\leq d(d+1)/2$ (and not for all functions $f\in L^2_{\rm n}({\cal D})$), we observe that, in the $\inf\max$ formulation introduced in Remark~\ref{re:infmax}, we have $\overline{A}^{{\rm max},\flat}_\eps\to A_\star$ when $\eps\to 0$.
\end{remark}

\begin{remark}
\label{rem:preuve_referee}
We recall that our approach consists in considering the problem~\eqref{eq:infsup}, that is
$$
I_\eps = \inf_{\overline{A}\in{\cal S}} \ \Phi_\eps(\overline{A}),
$$
where $\Phi_\eps$ is defined by~\eqref{eq:Phi.sup}: for any $\overline{A}$, 
$$
\Phi_\eps(\overline{A}) = \sup_{f\in\Ltwon} \Phi_\eps(\overline{A},f) = \sup_{f\in\Ltwon} \norm[\Ltwo]{ (-\Delta)^{-1} \left( \div(\overline{A}\grad u_\eps(f))+f\right)}^2.
$$
We show here that, when $\eps$ is sufficiently small, the minimum $I_\eps$ is attained.

\medskip

Consider indeed a minimizing sequence $\overline{A}_\eps^\eta$, satisfying, for any $\eta >0$,
\begin{equation}
\label{eq:quasi-min}
I_\eps \leq \Phi_\eps(\overline{A}_\eps^\eta) \leq I_\eps + \eta.
\end{equation}
Similarly to~\eqref{eq:rem}, we observe that, for any $f\in L^2_{\rm n}({\cal D})$, 
$$
\begin{alignedat}{1}
\norm[\Ltwo]{(-\Delta)^{-1} \left[ \div\left(\overline{A}^\eta_\eps\grad u_\eps(f) \right)-\div \Big(A_\star\grad u_\eps(f) \Big) \right]}^2
&\leq 2\left(\Phi_\eps(\overline{A}^\eta_\eps)+\Phi_\eps(A_\star)\right) \\
&\leq 2\left(I_\eps+\eta+\Phi_\eps(A_\star)\right) \\
&\leq 2\left(2\Phi_\eps(A_\star)+\eta\right).
\end{alignedat}
$$
Using~\eqref{eq:Z.bar}, we have
$$
\Big| Z_\eps \, \left(\overline{\vec{V}}^\eta_\eps - \vec{V}_\star \right) \Big|
\leq
C \sup_{f \in \Ltwon} \norm[\Ltwo]{(-\Delta)^{-1} \left[ \div\left(\overline{A}^\eta_\eps\grad u_\eps(f) \right)-\div \Big(A_\star\grad u_\eps(f) \Big) \right]}
$$
where $C$ is a constant independent of $\eps$ and $\eta$ and where the vector $\overline{\vec{V}}^\eta_\eps \in \mathbb{R}^{\frac{d(d+1)}{2}}$ is defined by $\dis \left[\overline{\vec{V}}^\eta_\eps\right]_{(i,j)} = \left[\overline{A}^\eta_\eps\right]_{i,j}$ for any $1\leq i\leq j\leq d$. When $\eps$ is sufficiently small, the matrix $Z_\eps$ is invertible with $Z_\eps^{-1}$ bounded independently of $\eps$. We thus deduce from the two above estimates that
$$
\Big| \overline{\vec{V}}^\eta_\eps - \vec{V}_\star \Big|^2 \leq C \left(\Phi_\eps(A_\star)+\eta\right)
$$
for some $C$ independent of $\eps$ and $\eta$. The vector $\overline{\vec{V}}^\eta_\eps$ (resp. $\vec{V}_\star$) is the representation (as a vector in $\mathbb{R}^{\frac{d(d+1)}{2}}$) of the symmetric matrix $\overline{A}_\eps^\eta \in \mathbb{R}^{d \times d}$ (resp. $A_\star$). We hence equivalently write that
$$
\Big| \overline{A}^\eta_\eps - A_\star \Big|^2 \leq C \left(\Phi_\eps(A_\star)+\eta\right).
$$
This shows that the sequence $\overline{A}^\eta_\eps$ is bounded independently of $\eta$. Up to the extraction of a subsequence (that we still denote $\eta$ for the sake of simplicity), it thus converges to some symmetric matrix $\overline{A}^0_\eps$ when $\eta \to 0$. Since $A_\star$ is positive definite and since $\dis \lim_{\eps \to 0} \Phi_\eps(A_\star) = 0$, we get that $\overline{A}_\eps^0$ is also positive-definite.

Passing to the limit $\eta \to 0$ in~\eqref{eq:quasi-min}, and temporarily assuming that $\Phi_\eps$ is continuous, we get that $I_\eps = \Phi_\eps(\overline{A}_\eps^0)$. This concludes the proof that the minimum $I_\eps$ is indeed attained when $\eps$ is sufficiently small.

\medskip

We are left with showing the continuity of $\overline{A} \mapsto \Phi_\eps(\overline{A})$. For any two matrices $\overline{A}_1$ and $\overline{A}_2$ and any $f \in \Ltwo$, we compute that
\begin{multline*}
\Phi_\eps(\overline{A}_1,f) - \Phi_\eps(\overline{A}_2,f)
=
\norm[\Ltwo]{ (-\Delta)^{-1} \left[ \div \left( (\overline{A}_1-\overline{A}_2) \grad u_\eps(f) \right) \right]}^2
\\
+ 2 \left\langle (-\Delta)^{-1} \left[ \div \left( (\overline{A}_1-\overline{A}_2) \grad u_\eps(f) \right) \right], (-\Delta)^{-1} \left[ \div \left( \overline{A}_2 \grad u_\eps(f) \right) + f \right] \right\rangle_{L^2({\cal D})},
\end{multline*}
hence
$$
\left| \Phi_\eps(\overline{A}_1,f) - \Phi_\eps(\overline{A}_2,f) \right|
\leq
C \, \left| \overline{A}_1-\overline{A}_2 \right|^2 \ \norm[\Ltwo]{f}^2
+
C \, \left| \overline{A}_1-\overline{A}_2 \right| \ \norm[\Ltwo]{f}^2,
$$
where $C$ is independent of $f$ and $\overline{A}_1$. Taking the supremum over $f \in \Ltwon$, we thus deduce that
$$
\left| \Phi_\eps(\overline{A}_1) - \Phi_\eps(\overline{A}_2) \right|
\leq
C \, \left| \overline{A}_1-\overline{A}_2 \right|^2
+
C \, \left| \overline{A}_1-\overline{A}_2 \right|,
$$
which implies that $\dis \lim_{\overline{A}_1 \to \overline{A}_2} \Phi_\eps(\overline{A}_1) = \Phi_\eps(\overline{A}_2)$, and thus the continuity of $\Phi_\eps$.
\end{remark}

\section{Details on the algorithm to solve the discrete problem~\eqref{eq:infsup.app}}
\label{sec:appB}

Let $\Phi^{P,M}_{\eps,h}(\overline{A},\vec{c})$ be given by~\eqref{eq:Phi.app}. Using the fact that $\Phi^{P,M}_{\eps,h}(\overline{A},\vec{c})$ is quadratic with respect to $\vec{c} \in \R^P$, one can easily observe that
\begin{equation*} %\label{eq:Phi.G}
\Phi^{P,M}_{\eps,h}(\overline{A},\vec{c})=\vec{c}^T\,G_{\eps,h}^M(\overline{A})\,\vec{c},
\end{equation*}
where $G_{\eps,h}^M(\overline{A})$ is the $P\times P$ matrix defined, for any $1\leq p,q\leq P$, by
\begin{multline} \label{eq:Gmat}
\left[G_{\eps,h}^M(\overline{A})\right]_{p,q} = \frac{1}{2} \sum_{1\leq i,j,k,l\leq d} \left[{\cal K}_{\eps,h}^M\right]_{i,j,k,l,p,q} \ \overline{A}_{i,j} \ \overline{A}_{k,l}
\\
-\sum_{1\leq i,j\leq d} \left( \left[\mathbb{K}_{\eps,h}^M \right]_{i,j,p,q} + \left[\mathbb{K}_{\eps,h}^M\right]_{i,j,q,p} \right) \overline{A}_{i,j} + \left[K_h\right]_{p,q},
\end{multline}
where ${\cal K}_{\eps,h}^M$, $\mathbb{K}_{\eps,h}^M$ and $K_h$ are defined by~\eqref{eq:Btens.app}, \eqref{eq:Bmat.app} and~\eqref{eq:bscal.app}, respectively. 

\medskip

Using the fact that $\Phi^{P,M}_{\eps,h}(\overline{A},\vec{c})$ is also quadratic with respect to $\overline{A}$, we have that
$$
\Phi^{P,M}_{\eps,h}(\overline{A},\vec{c}) = \frac{1}{2} \sum_{1\leq i,j,k,l\leq d} \left[\mathbb{B}_{\eps,h}^{P,M}(\vec{c})\right]_{i,j,k,l} \ \overline{A}_{i,j} \ \overline{A}_{k,l}-\sum_{1\leq i,j\leq d} \left[B_{\eps,h}^{P,M}(\vec{c})\right]_{i,j} \ \overline{A}_{i,j} + b_h^P(\vec{c}),
$$
where $\mathbb{B}_{\eps,h}^{P,M}(\vec{c})$ is the $d\times d\times d\times d$ fourth-order tensor defined by
\begin{equation} 
\label{eq:Ktens}
\left[\mathbb{B}_{\eps,h}^{P,M}(\vec{c})\right]_{i,j,k,l} 
= 
\sum_{1\leq p,q\leq P} \left[{\cal K}_{\eps,h}^M\right]_{i,j,k,l,p,q} \ c_p \ c_q,
\end{equation}
$B_{\eps,h}^{P,M}(\vec{c})$ is the $d\times d$ matrix defined by
\begin{equation} 
\label{eq:Kmat}
\left[B_{\eps,h}^{P,M}(\vec{c})\right]_{i,j} 
= 
\sum_{1\leq p,q\leq P} \left(\left[\mathbb{K}_{\eps,h}^M\right]_{i,j,p,q} + \left[\mathbb{K}_{\eps,h}^M \right]_{i,j,q,p} \right) c_p \ c_q,
\end{equation}
and 
$$
b_h^P(\vec{c}) = \sum_{1\leq p,q\leq P} \left[K_h\right]_{p,q} \ c_p \ c_q,
$$ 
where ${\cal K}_{\eps,h}^M$, $\mathbb{K}_{\eps,h}^M$ and $K_h$ are defined by~\eqref{eq:Btens.app}, \eqref{eq:Bmat.app} and~\eqref{eq:bscal.app}, respectively. We remark, in light of the expressions~\eqref{eq:Btens.app} and~\eqref{eq:Bmat.app}, that 
$$
\left[\mathbb{B}_{\eps,h}^{P,M}(\vec{c})\right]_{i,j,k,l} = \left[\mathbb{B}_{\eps,h}^{P,M}(\vec{c})\right]_{k,l,i,j},
\qquad
\left[\mathbb{B}_{\eps,h}^{P,M}(\vec{c})\right]_{i,j,k,l} = \left[\mathbb{B}_{\eps,h}^{P,M}(\vec{c})\right]_{j,i,k,l},
$$
and $\left[B_{\eps,h}^{P,M}(\vec{c})\right]_{i,j} = \left[B_{\eps,h}^{P,M}(\vec{c})\right]_{j,i}$. 

\footnotesize
\bibliographystyle{plain}
\bibliography{clbflsl_v2}

\end{document}